\documentclass[10pt]{amsart}
\usepackage{amssymb,amsbsy,amsmath,amsfonts,times}
\usepackage{latexsym,euscript,exscale}

\usepackage{helvet}

\title{Completely metrisable groups acting on trees}
\author {Christian Rosendal}
\address{Department of Mathematics, Statistics, and Computer Science (M/C 249)\\
University of Illinois at Chicago\\
851 S. Morgan St.\\
Chicago, IL 60607-7045\\
USA}
\email{rosendal@math.uic.edu}
\urladdr{http://www.math.uic.edu/$~$rosendal}

\date {}
\linespread {1.0}

\newcommand {\N}{\mathbb N}
\newcommand {\Q}{\mathbb Q}
\newcommand {\R}{\mathbb R}
\newcommand {\Z}{\mathbb Z}

\newcommand {\U}{\mathbb U}

\newcommand {\D}{\mathbb D}

\newcommand{\iso}{\cong}

\newcommand{\tom} {\emptyset}

\newcommand{\inj}{\hookrightarrow}

\newcommand{\saa}{\Rightarrow}
\newcommand{\equi}{\Longleftrightarrow}

\newcommand{\til}{\rightarrow}

\newcommand {\del}{ \; \big| \;}

\newcommand {\go} {\mathfrak}

\newcommand{\inv}{^{-1}}

\newcommand {\e} {\exists}
\renewcommand {\a} {\forall}

\newtheorem{thm}{Theorem}[section]
\newtheorem{cor}[thm]{Corollary}
\newtheorem{lemme}[thm]{Lemma}
\newtheorem{prop} [thm] {Proposition}
\newtheorem{defi} [thm] {Definition}

\newtheorem{claim}[thm] {Claim}

\newtheorem{conj}[thm]{Conjecture}

\usepackage{fouriernc}

\begin{document}
\subjclass[2000]{Primary: 20E08, Secondary: 03E15}
\thanks{The research of the author was partially supported by NSF grants DMS 0901405 and DMS 0919700}

\keywords{Completely metrisable groups, Locally compact groups, Actions on trees, Bass--Serre Theory}
\begin{abstract}We consider actions of completely metrisable  groups on simplicial trees in the context of the Bass--Serre theory. Our main result characterises continuity of the amplitude function corresponding to a given action. Under fairly mild conditions on a completely metrisable  group $G$, namely, that the set of elements generating a non-discrete or finite subgroup is somewhere dense, we show that in any decomposition as a free product with amalgamation, $G=A\ast_CB$, the amalgamated groups $A$, $B$ and $C$ are open in $G$.\end{abstract}

\maketitle
\tableofcontents

\section{Introduction} 
While Klein in his Erlangen programme insisted that geometric structures should be understood through their symmetry groups, one of the main tenets of geometric group theory is that abstract groups should be understood by via their actions on geometric structures. A particular instance of this is the Bass--Serre theory of actions on trees \cite{serre}, which gives a complete understanding of free products with amalgamation and HNN extensions of groups. The present paper specialises the Bass--Serre theory to common topological groups and can be seen as an attempt to provide a coherent approach to various phenomena from the literature indicating a certain rigidity of completely metrisable  groups with respect to free constructions and actions on trees. We are specifically aiming to gain a better understanding of the following result.
\begin{thm}[R. M. Dudley \cite{dudley}]\label{dudley}
Let $G$ be a completely metrisable or locally compact, Hausdorff, topological group. Assume that 
$$
\pi \colon G\til F
$$
is a homomorphism into a free group (free Abelian, or free non-Abelian). Then $\pi$ is continuous with respect to the discrete topology on $F$, i.e., 
${\rm ker}(\pi)$ is an open subgroup of $G$.
\end{thm}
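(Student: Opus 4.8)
The plan is to route everything through the word-length norm on $F$ and an automatic-continuity argument of Baire-category type, using completeness only at the very end. I would equip $F$ with its word length $N$ relative to a free basis (in the free abelian case, the $\ell^1$-norm $N(v)=\sum_i|v_i|$ relative to a basis) and set $f=N\circ\pi\colon G\til\N$. Then $f$ is symmetric, $f(e)=0$, integer-valued, and subadditive, $f(gh)\le f(g)+f(h)$; moreover it satisfies the crucial power-growth estimate $f(g^n)\ge|n|$ whenever $\pi(g)\ne e$. Indeed, writing a nontrivial $w\in F$ as $w=ucu\inv$ with $c$ cyclically reduced of length $\ell\ge1$ gives $|w^n|=2|u|+n\ell\ge|n|$, while in the abelian case $N(nv)=|n|\,N(v)\ge|n|$. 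Showing that $\ker(\pi)$ is open is then equivalent to showing that $f$ vanishes on a neighbourhood of $e$.

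Next I would locate, by a category argument, a place where $f$ is densely bounded. Put $A_m=\{g:f(g)\le m\}$; each $A_m$ is symmetric, $A_mA_m\subseteq A_{2m}$, and $G=\bigcup_m A_m$. Since $G$ is completely metrisable or locally compact Hausdorff, it is a Baire space, so some $A_m$ is non-meager, whence $\ov{A_m}$ has non-empty interior. Continuity of multiplication gives $\ov{A_m}\,\ov{A_m}\subseteq\ov{A_{2m}}$ and $\ov{A_m}\inv=\ov{A_m}$; choosing a non-empty open $W\subseteq\ov{A_m}$ and using $W\inv W\subseteq\ov{A_m}\,\ov{A_m}\subseteq\ov{A_{2m}}$ together with the fact that $W\inv W$ is a neighbourhood of $e$, I obtain an open $U\ni e$ with $U\subseteq\ov{A_M}$ for $M=2m$. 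Thus the sublevel set $A_M$ is dense in a neighbourhood of the identity: every point of $U$ is a limit of elements on which $f\le M$.

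Finally I would derive a contradiction from the assumption that $\ker(\pi)$ is not open. The idea is to build, inside $U$, an element $g$ whose image $\pi(g)$ would be forced to have arbitrarily large word length — which is impossible, since $\pi(g)$ is a single reduced word of finite length. Using that $\ker(\pi)$ is not open one finds near-identity elements with nontrivial image; combining these with the density of $A_M$ and the power-growth estimate, one constructs a rapidly converging product (or a suitable solution of a self-similar equation) $g=\lim_k x_1\cdots x_k$, where completeness guarantees convergence. For the completely metrisable case one uses that a metrisable group is completely metrisable precisely when it is Ra\"ikov-complete, and in the locally compact case one uses completeness of the two-sided uniformity. The construction should be arranged so that the finite truncations already pin down longer and longer non-cancelling reduced subwords of $\pi(g)$, forcing $N(\pi(g))=\infty$.

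The hard part is exactly this last construction, and the obstacle is structural rather than computational: because $\pi$ is only an abstract homomorphism, the image of a limit of products is \emph{not} the limit of the images, so $\pi$ cannot be evaluated on the constructed element by any appeal to continuity. The construction must therefore be engineered so that largeness of the word length is detected already at the finite stages and cannot be undone in passing to the limit. This is where the freeness of $F$ (control of cancellation in reduced words, equivalently positivity of translation lengths on its Cayley tree) and the integrality of $N$ must be used in tandem with Ra\"ikov-completeness — the integrality being what rules out the escape-to-infinity behaviour seen for connected groups such as $\R$, where $f\equiv0$ for purely algebraic reasons. This is precisely the step that the paper's machinery on the amplitude function is designed to streamline.
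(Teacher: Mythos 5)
Your first two paragraphs set up exactly the right objects (they are Dudley's own): the word-length norm $N$, its subadditivity, and the power-growth estimate $N(w^n)\geqslant |n|$ for $w\neq e$ are precisely the features of free and free Abelian groups that the theorem runs on. But there is a genuine gap, and you name it yourself: the entire content of the proof is the infinite-product construction that you defer, and nothing in your setup produces it. Note also that the paper never proves this theorem (it is quoted from Dudley); the place to compare is the paper's proof of Theorem \ref{dihedral}, which is explicitly an adaptation of Dudley's argument and contains the construction you are missing. Moreover, your Baire-category step is a detour that the argument neither needs nor can use: knowing that the sublevel set $A_M$ is \emph{densely} small near $e$ is perfectly consistent with $\ker(\pi)$ failing to be open, and it plays no role in the real proof. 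The localisation you actually need goes in the opposite direction and uses no category at all: if $\ker(\pi)$ is not open, then every neighbourhood $U\ni e$ contains elements with $f=N\circ\pi$ arbitrarily large. Indeed, given $n$, choose $V\ni e$ with $V^{n+1}\subseteq U$ and $g\in V$ with $\pi(g)\neq e$; then $g^{n+1}\in U$ and $f(g^{n+1})\geqslant n+1$ by your own power-growth estimate.

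The missing construction resolves the obstacle you raise (that $\pi$ cannot be evaluated on limits by continuity) as follows. Choose $g_m\til e$ so fast that the infinite products
$$
y_m=g_m\big(g_{m+1}(g_{m+2}(\ldots)^{k_{m+2}})^{k_{m+1}}\big)^{k_m}
$$
all converge (the only use of completeness, via Ra\"ikov-completeness of completely metrisable groups), where the exponents are defined recursively by $k_m=m+\sum_{i=1}^m f(g_i)$ and, using the localisation above, $g_{m+1}$ is chosen small with $f(g_{m+1})>k_m$. The crucial point is that $y_m=g_my_{m+1}^{k_m}$ is an \emph{exact identity in $G$} --- it follows from convergence of the tails and continuity of multiplication in $G$ --- so that $\pi(y_m)=\pi(g_m)\pi(y_{m+1})^{k_m}$ holds by the homomorphism property alone; continuity of $\pi$ is never invoked, which is exactly how largeness gets detected at finite stages. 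Now run the dichotomy: if $\pi(y_{m+1})=e$, then $N(\pi(y_m))=f(g_m)>k_{m-1}$; if $\pi(y_{m+1})\neq e$, then $N(\pi(y_m))\geqslant N(\pi(y_{m+1})^{k_m})-f(g_m)\geqslant k_m-f(g_m)=k_{m-1}+1$. Either way $N(\pi(y_m))>k_{m-1}$ for every $m$, and since in free (and free Abelian) groups $N(w^k)\geqslant N(w)$ for $k\geqslant 1$, telescoping gives
$$
N(\pi(y_1))\geqslant N(\pi(y_{m+1}))-\sum_{i=1}^m f(g_i)>k_m-(k_m-m)=m
$$
for all $m$, contradicting finiteness of $N(\pi(y_1))$. (This is word-for-word the scheme of the paper's Theorem \ref{dihedral} with amplitude replaced by word length and the elliptic/hyperbolic dichotomy replaced by $\pi(y_{m+1})=e$ or $\neq e$; the even exponents there exist only to kill reflections, i.e.\ torsion, and are unnecessary here since free groups are torsion-free.) Without this recursive scheme your proposal stops exactly where the theorem begins.
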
 
As an immediate application, one sees that the only completely metrisable or locally compact, Hausdorff, group topology on a free group is the discrete one. Going beyond Dudley's result, it is natural to attempt to weaken the conditions on $F$. Now, by the Nielsen--Schreier--Serre Theorem, a group is free if and only if it acts freely and without inversion on a tree, so we might replace the homomorphism $\pi$ with an action of $G$ without inversion on a tree. A number of results in the literature treat various aspects of this problem, see, e.g., \cite{alperin1,alperin2,bass,guirardel,morris,ivanov1,kossak,tits,mactho,shelah}, notably the results of R. Alperin \cite{alperin2}  entirely elucidating the situation for locally compact, Hausdorff groups (for ease of exposition, henceforth, all locally compact groups will be assumed to be Hausdorff).

Our main result isolates  a very natural property of actions of completely metrisable  groups on trees, which we will indicate might be inevitable. Before stating it, let us stress that, though the groups we consider are topological groups, we never make any assumptions as to the continuity of their actions. Determining to which extent they are necessarily continuous is, in fact, one of the main objectives of this paper.

\begin{thm}\label{equiv}Let $G$ be a completely metrisable or locally compact group acting without inversion on  a tree $X$. Then the following four properties are equivalent.
\begin{enumerate}
\item $G$ either fixes an end or has an open subgroup fixing a vertex,
\item the amplitude $\|\cdot\|\colon G\til \N$ is a continuous function on $G$,
\item the set of elliptic isometries is somewhere comeagre in $G$,
\item there is an open subgroup consisting of elliptic isometries.
\end{enumerate}
Moreover, the above properties hold for any action of $G$ if and only if whenever $G$
is written as a non-trivial free product with amalgamation, $G=A*_CB$,  the subgroups $A$, $B$, and $C$ are open in $G$.
\end{thm}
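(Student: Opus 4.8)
Write $\|g\|=\min_v d(v,gv)$ for the amplitude, so that $g$ is elliptic exactly when $\|g\|=0$, and recall the elementary identity, valid at \emph{every} vertex $v$,
\[
\|g\|=\max\{0,\ d(v,g^2v)-d(v,gv)\},
\]
together with the classical fact \cite{serre} that a group all of whose elements are elliptic fixes a vertex or a unique end. I would establish the equivalence as the cycle $(1)\Rightarrow(2)\Rightarrow(3)\Rightarrow(4)\Rightarrow(1)$. For $(1)\Rightarrow(2)$ there are two cases. If $G$ fixes an end $\xi$, the Busemann (horocyclic translation) cocycle is a homomorphism $\chi\colon G\til\Z$ with $\|g\|=|\chi(g)|$, and Theorem \ref{dudley} forces $\chi$ to be continuous into discrete $\Z$, whence $\|\cdot\|=|\chi|$ is continuous. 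If instead $G$ has an open subgroup $H$ fixing a vertex $v$, then $g\mapsto gv$ is constant on each coset $gH$, hence continuous; consequently both $g\mapsto gv$ and $g\mapsto g^2v=g(gv)$ are locally constant, and the displayed identity at $w=v$ exhibits $\|\cdot\|$ as locally constant. The implication $(2)\Rightarrow(3)$ is immediate, as $\{g:\|g\|=0\}$ is then clopen and contains $1$. For $(4)\Rightarrow(1)$, let $H$ be the open subgroup of elliptics; by \cite{serre} it fixes a vertex, giving $(1)$, or it fixes a unique end $\eta$ with no fixed vertex. In the latter case either every $g\in G$ fixes $\eta$, so $G$ fixes an end, or some $g$ has $g\eta\ne\eta$; then $H\cap gHg\inv$ is an open subgroup of elliptics fixing both $\eta$ and $g\eta$, so each of its elements fixes the line $(\eta,g\eta)$ pointwise, yielding an open subgroup fixing a vertex.

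\textbf{The crux: $(3)\Rightarrow(4)$.} This is a Baire category (respectively Haar measure) argument. Writing the elliptic set as $E=\bigcup_v\mathrm{Stab}(v)$, and using that $G$ is Baire, the hypothesis that $E$ is somewhere comeagre (hence non-meagre) should force some vertex stabiliser $\mathrm{Stab}(v)$ to be non-meagre; being a subgroup, it is then open by Pettis' theorem as soon as it has the Baire property, and an open vertex stabiliser is an open subgroup of elliptics, i.e.\ $(4)$. In the locally compact case one replaces ``non-meagre/Baire property'' by ``positive Haar measure/measurability'' and Pettis by Steinhaus. I expect the genuine difficulty to lie precisely here: since no continuity of the action is assumed, the stabilisers need not be Borel, so \emph{establishing the Baire property (or measurability) of the relevant stabilisers}—and handling the case of an uncountable vertex set—is the main obstacle. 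The geometric tool to fall back on is a Kuratowski--Ulam argument on pairs of generic elliptics combined with the Helly property of subtrees (pairwise intersecting fixed subtrees share a vertex or an end), which produces an open subgroup with a global fixed vertex or end and so returns us, via $(1)$, to $(4)$.

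\textbf{The ``moreover'' clause, forward direction.} Given a non-trivial amalgam $G=A*_CB$ I would pass to its Bass--Serre tree, on which the action is minimal with no fixed vertex and, unless $[A:C]=[B:C]=2$, no fixed end. Applying the now-established equivalence, property $(1)$ must hold through its second alternative, so there is an open subgroup $O$ fixing a vertex $w$. As in $(1)\Rightarrow(2)$ the orbit map $g\mapsto gw$ is then locally constant; since both vertex types have degree at least two, the convex hull of a single vertex-orbit is the whole tree, so every orbit map $g\mapsto gx$ is continuous, and therefore every stabiliser is open—in particular $A$, $B$ and $C=A\cap B$. The degenerate lineal case $[A:C]=[B:C]=2$ is handled by the end/character argument through Theorem \ref{dudley}.

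\textbf{The ``moreover'' clause, converse.} Suppose some action fails $(1)$; replacing $X$ by the minimal invariant subtree I may assume it is minimal with a hyperbolic element and no fixed end. If the quotient graph has two vertex-orbits, there is an edge between distinct orbits, and collapsing the complementary edge-orbits yields a splitting $G=A*_CB$ whose edge group $C$ fixes a vertex; were $C$ open we would recover $(1)$, so $C$ is a non-open amalgam factor, contradicting the hypothesis. The remaining, purely HNN, case—a single vertex-orbit, in which $G$ may admit no amalgam decomposition at all—is the second delicate point. Here the hypothesis can be vacuous, and one must instead show directly that such an action cannot fail $(1)$: I would apply Theorem \ref{dudley} to the length homomorphism $\epsilon\colon G\til\Z$ to obtain an open subgroup $\ker\epsilon$ of infinite index preserving the level sets of the natural equivariant map to a line, and argue that $\ker\epsilon$ must fix a vertex. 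Making this last step precise, together with the Baire-property issue in $(3)\Rightarrow(4)$, is what I regard as the principal obstacle of the whole theorem.
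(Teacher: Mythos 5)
Your cycle $(1)\Rightarrow(2)\Rightarrow(3)$ and the implication $(4)\Rightarrow(1)$ match the paper's proof essentially verbatim (the signed-amplitude homomorphism into $\Z$ plus Theorem \ref{dudley}, local constancy via Lemma \ref{amplitude}, and the two-conjugates trick for $(4)\Rightarrow(1)$). Your forward ``moreover'' direction is also sound and even takes a slightly different route: the paper intersects two conjugates of the vertex-fixing open subgroup to land inside a conjugate of $C$ directly, whereas you propagate continuity of one orbit map to all orbit maps via the convex hull of the orbit; incidentally, your lineal caveat is unnecessary, since the Busemann character of a hypothetical $G$-fixed end would vanish on the elliptic generators $A\cup B$ but not on the hyperbolic $ab$, so the Bass--Serre tree of a non-trivial amalgam never has a $G$-fixed end (the paper does, however, still need the end case for the open subgroup $H$, which need not fix a vertex).

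At the two places you yourself flag as obstacles there are genuine gaps, and your fallback sketches do not close them. For $(3)\Rightarrow(4)$, the strategy of forcing some $\mathrm{Stab}(v)$ to be non-meagre is not merely technically delicate but doomed: in the paper's own example, $G=\R=\bigcup_n G_n$ with $G_n$ the $\Q$-span of an increasing chain of subsets of a Hamel basis, acting on the coset tree, every element is elliptic, so $(3)$ holds trivially, yet every vertex stabiliser is a conjugate of some $G_n$ and hence proper and non-open; here $(4)$ is realised only through a neutral fixed end, never through an open vertex stabiliser, so no category argument can produce one. The paper's actual proof is a cascade of three lemmas (Lemmas \ref{finite segment}, \ref{half line}, \ref{line}): a minimality argument over finite segments met by the characteristic trees of a comeagre set of elliptics, a conjugation by a power $(hg)^j$ of a hyperbolic inside $V^{2j+1}V^{-2j}\subseteq U$ to translate the comeagre set off itself, Tits's lemma on pairwise intersecting subtrees, and---for the residual case of an invariant line---the separate Theorem \ref{dihedral}, itself proved by a Dudley-style infinite-product construction $y_m=g_my_{m+1}^{k_m}$ whose amplitude estimates diverge (with Theorem \ref{locally compact} substituting in the locally compact case). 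None of this machinery is supplied by your Kuratowski--Ulam/Helly gesture. For the converse ``moreover'' direction, your reduction fails at the first step: with two vertex orbits, collapsing the complementary edge orbits can turn the chosen edge into a loop (several edge orbits may join the same pair of vertex orbits), yielding an HNN splitting rather than an amalgam; and in the one-orbit case, as you concede, the amalgam hypothesis gives no purchase at all, while your proposed final step is false---$\ker\epsilon$ need not fix a vertex (for $BS(1,2)$ the kernel $\Z[1/2]$ fixes only an end), so the correct target is a neutral fixed end, not a vertex. The paper's missing idea is structural: let $R\leqslant G$ be generated by all elliptics; then $G/R\cong\pi_1(G\setminus X)$ is free (Theorem \ref{serre2}), so $R$ is open by Theorem \ref{dudley}; $R\setminus X$ is a tree, so there is a fundamental domain $T$ with $R\cong\varinjlim(R,T)$, and each edge $e$ of $T$ gives a one-edge splitting $R=A_e*_{R_e}A_{\overline e}$ to which the amalgam hypothesis applies; assuming no open vertex (hence edge) stabiliser, every such splitting is trivial, which orients the edges of $T$, and uniqueness of the outgoing positive edge at each vertex produces a ray $(x_n)$ with $R=\bigcup_nR_{x_n}$, i.e.\ a neutral fixed end, making $R$ an open subgroup of elliptics and establishing $(4)$. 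This edge-orientation argument is precisely what your HNN case lacks.
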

We recall that a subset of $G$ is {\em comeagre} in an open set $U\subseteq G$ if it contains the intersection of countably many dense open subsets of $U$. Also, a subset of $G$ is {\em somewhere comeagre} if it is comeagre in a non-empty open set. A free product with amalgamation, $G=A*_CB$, is {\em non-trivial} if $A$ and $B$ are proper subgroups of $G$. Finally, an isometry of a tree is {\em elliptic} if it fixes a vertex.

To simplify exposition, we shall refer to the equivalent properties of Theorem \ref{equiv} under one name.
\begin{defi}
A topological group $G$ is said to have {\em property (OA)} (for open amalgams), if whenever $G=A*_CB$ is a non-trivial decomposition, the three subgroups $A$, $B$ and $C$ are open. 
\end{defi}

With this definition, we can formulate one of the main results of  \cite{alperin2} as follows.
\begin{thm}[R. Alperin \cite{alperin2}]
Locally compact groups have property (OA).
\end{thm}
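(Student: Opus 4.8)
The plan is to deduce the result from Theorem~\ref{equiv}. By its final ``Moreover'' clause, $G$ has property~(OA) precisely when every action of $G$ without inversion on a tree satisfies the equivalent conditions (1)--(4). So I fix an arbitrary such action of a locally compact group $G$ on a tree $X$, make no continuity assumption, and aim to verify condition~(1) --- in fact to show that $G$ either has an open subgroup fixing a vertex or fixes an end. The locally compact hypothesis enters through a left Haar measure $\mu$ on $G$, and the leverage it provides over the completely metrisable case is the Steinhaus--Weil theorem: a set of positive inner Haar measure has a difference set containing a neighbourhood of the identity.

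The basic mechanism attaches this to vertex stabilisers. Fix a vertex $v_0$ and put $H=\mathrm{Stab}(v_0)$. As $w$ ranges over the orbit $G\cdot v_0$, the fibres $A_w=\{g\in G: g\cdot v_0=w\}$ of the (possibly discontinuous) orbit map are exactly the left cosets of $H$, and $A_w^{-1}A_w\subseteq H$ for each $w$. Hence if some stabiliser $H$ has positive inner measure, it contains a compact set $C$ with $\mu(C)>0$, and since $CC^{-1}\subseteq HH^{-1}=H$ the Steinhaus--Weil theorem places a neighbourhood of $1$ inside $H$; a subgroup containing a neighbourhood of the identity is open, so $H$ is an open subgroup fixing $v_0$ and condition~(1) holds. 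The reason for working with the subgroup $H$, rather than with the full set of elliptic elements, is that the latter is not closed under multiplication, whereas $HH^{-1}=H$ keeps the Steinhaus output inside the elliptic regime.

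This produces a clean dichotomy: either some vertex stabiliser is non-null --- and we are done as above --- or every vertex stabiliser has inner measure zero. The hard part, and the step I expect to dominate the proof, is to show that this second alternative forces $G$ to fix an end. It is here that the absence of a continuity hypothesis really bites: were the action continuous, the orbit map into the discrete vertex set would be continuous and every stabiliser open, so the entire difficulty is that the stabilisers need not even be measurable. To treat the null case I would push the tree combinatorics into the measure. Serre's lemma (if $g$, $h$ and $gh$ are all elliptic, then $\mathrm{Fix}(g)\cap\mathrm{Fix}(h)\neq\emptyset$, see~\cite{serre}) together with the Helly property of subtrees forbids a positive-measure family of elliptics with pairwise-incompatible fixed sets, so the measure must concentrate on hyperbolic isometries; and because $\mu$ is finite on compacta, the translation lengths and axes of these hyperbolics cannot disperse freely. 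I would then argue that this compactness of the translation data forces the axes to point coherently towards a single end $\xi$, necessarily fixed by all of $G$, giving the remaining case of condition~(1). Turning this coherence heuristic --- from ``$\mu$-many hyperbolics'' to ``one common end'' --- into a rigorous argument is, I expect, the crux of the whole matter.
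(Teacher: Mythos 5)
Your reduction to Theorem \ref{equiv} is the right frame, and your Steinhaus--Weil step is sound: a vertex stabiliser $H$ of positive inner Haar measure contains a compact set $C$ with $\mu(C)>0$, hence contains $CC^{-1}$, which is a neighbourhood of the identity, so $H$ is open and condition (1) holds. But the other horn of your dichotomy --- that all vertex stabilisers being null forces $G$ to fix an end --- is essentially the whole content of the theorem, and you leave it as a heuristic that you yourself flag as the crux. Worse, the heuristic is false. Take the paper's first example in Section \ref{examples}: $G=(\R,+)$, a locally compact completely metrisable group, written as the union of an increasing chain of proper non-open subgroups $G_n$ (the $\Q$-spans of increasing pieces of a Hamel basis), acting on the tree with vertex set $\bigsqcup_n G/G_n$. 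Here \emph{every} element is elliptic (each $g$ lies in some $G_n$), so the measure cannot ``concentrate on hyperbolic isometries'' --- there are none --- yet every vertex stabiliser is a conjugate of some $G_n$, a proper $\Q$-subspace of $\R$, hence of inner measure zero. The fixed end in this example is a \emph{neutral} one, produced by the increasing-union structure of the stabilisers, not by coherently pointing hyperbolic axes, so your compactness-of-translation-data mechanism has nothing to act on; any correct treatment of your second case must detect such neutral fixed ends. There are also unaddressed measurability issues: the set of elliptic elements need not be Haar measurable, so ``the measure must concentrate on hyperbolics'' is not even well-formed without inner/outer measure bookkeeping you have not supplied.

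For comparison, the paper avoids the vertex-versus-end dichotomy altogether by targeting condition (4) of Theorem \ref{equiv} rather than condition (1). By van Dantzig's theorem and Alperin's prior results that compact groups and connected locally compact groups have property (FA'), every locally compact group has an open subgroup with property (FA') (Theorem \ref{locally compact}: take the preimage in $G$ of a compact open subgroup of $G/G_0$, and use that an extension of a group with (FA') by a group with (FA') again has (FA')). Such a subgroup consists of elliptic isometries in any action without inversion, which is exactly condition (4), and the ``moreover'' clause of Theorem \ref{equiv} then yields property (OA). The delicate work of deciding whether one obtains a fixed vertex or a fixed end is thereby delegated to the already-proven implication (4)$\Rightarrow$(1) inside Theorem \ref{equiv}, instead of being redone from scratch with Haar measure.
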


Using Theorem \ref{equiv}, we show that the same holds for a fairly substantial class of completely metrisable groups.

\begin{thm}\label{somewhere comeagre}
Let $G$ be a completely metrisable group such that the set
$$
\D=\{g\in G\del g\text{ generates a finite or non-discrete subgroup of }G\}
$$
is somewhere dense in $G$. Then $G$ has property (OA). 
\end{thm}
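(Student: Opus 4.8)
The plan is to exploit the final clause of Theorem \ref{equiv}: property (OA) is equivalent to the assertion that \emph{every} action of $G$ without inversion on a tree satisfies the four equivalent conditions listed there. So I fix an arbitrary such action on a tree $X$, write $\|\cdot\|\colon G\til\N$ for the amplitude and $E=\{g:\|g\|=0\}$ for the set of elliptic isometries, and aim to verify condition (3), that $E$ is somewhere comeagre; since the action is arbitrary, Theorem \ref{equiv} will then deliver (OA). First I record that $\D$ is Borel, hence has the Baire property: the torsion part $\D_{\rm fin}=\bigcup_{n\ge1}\{g:g^n=e\}$ is $F_\sigma$ (each $\{g:g^n=e\}$ is closed), while the non-discrete part $\D_{\rm nd}=\{g:e\text{ is not isolated in }\langle g\rangle\}$ equals $\bigcap_{k}\bigcup_{n\neq0}\{g:0<d(g^n,e)<1/k\}$ and so is $G_\delta$. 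By hypothesis $\D=\D_{\rm fin}\cup\D_{\rm nd}$ is dense in some nonempty open $U\subseteq G$.

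The key step is to show $\D\subseteq E$. Finite-order elements are elliptic, since a nontrivial power of a hyperbolic isometry is again hyperbolic; thus $\D_{\rm fin}\subseteq E$. For $\D_{\rm nd}$, suppose towards a contradiction that some $g$ generating a non-discrete subgroup is hyperbolic, with axis $L$ and integer translation length $\|g\|=n\ge1$. Put $K=\ov{\langle g\rangle}$; as a closed subgroup of $G$ it is completely metrisable, and it is abelian because multiplication is continuous. Every $h\in K$ commutes with $g$ and hence preserves the axis $L$ of $g$, since $h$ maps $L$ to the axis of $hgh^{-1}=g$. Restricting to $L$ and using that the action is without inversion, I obtain a homomorphism $\psi\colon K\til\Z$ recording signed translation length along $L$ -- defined on all of $K$, for instance by $\psi(h)=\tfrac12\tau(h^2)$, where $\tau$ is the signed translation length of an orientation-preserving isometry of $L$ -- which satisfies $\psi(g)=n\neq0$. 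By Dudley's Theorem \ref{dudley}, applied to the completely metrisable group $K$ and the free abelian group $\Z$, the kernel $\ker\psi$ is open in $K$. But non-discreteness of $\langle g\rangle$ yields integers $k_j$ with $|k_j|\to\infty$ and $g^{k_j}\to e$ in $K$, whereas $\psi(g^{k_j})=k_jn\neq0$; this contradicts the openness of $\ker\psi$. Hence $\D\subseteq E$.

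It remains to upgrade the resulting density of $E$ (which now contains the dense set $\D\cap U$) to the comeagreness demanded by condition (3); this is the main obstacle, since without any continuity or measurability assumption on the action $E=\bigcup_{v}\mathrm{Stab}(v)$ is an a priori uncountable union of possibly non-closed subgroups and need not itself have the Baire property. I would organise the endgame as a Baire dichotomy using the complexity computed above. If $\D_{\rm nd}$ is dense in some nonempty open $W\subseteq U$, then, being a dense $G_\delta$ in a Baire space, it is comeagre in $W$, so $E\supseteq\D_{\rm nd}$ is comeagre in $W$ and condition (3) holds. Otherwise $\D_{\rm nd}$ is nowhere dense in $U$ and $\D_{\rm fin}$ is dense in the open set $U\setminus\ov{\D_{\rm nd}}$; if one of the closed sets $\{g:g^n=e\}$ there has nonempty interior, that interior is an open set of finite-order, hence elliptic, elements, again giving condition (3). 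The genuinely delicate case is that the torsion elements are dense but meagre while $\D_{\rm nd}$ is nowhere dense, so that comeagrely many elements generate infinite discrete cyclic groups; here condition (3) reduces to showing that the hyperbolic stratum $\{g:\|g\|\ge1\}$ is meagre on a nonempty open set. I expect this to be the crux, and to be handled by running the abelian-closure argument of the previous paragraph locally: around a putative accumulation point of hyperbolic elements one again produces, via commutation and Dudley's theorem, a nontrivial $\Z$-valued homomorphism with open kernel that is incompatible with the density of $\D$.
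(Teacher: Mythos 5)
Your first half---showing every element of $\D$ is elliptic in any action without inversion---is correct, and is a genuine variant of the paper's route: where the paper (Lemma \ref{bass}) lets the closure $\overline{\langle g\rangle}$ act on the axis $\ell_g$ and invokes Theorem \ref{dihedral} to produce an open subgroup fixing a vertex, you use commutativity of $\overline{\langle g\rangle}$ to control the action on the axis and then apply Dudley's Theorem \ref{dudley} to a translation-length homomorphism into $\Z$. Both work; yours bypasses Theorem \ref{dihedral}, at the cost of the (easily supplied, but currently glossed) argument that an abelian group of isometries of a line containing a nontrivial translation must act entirely by translations, which is what makes your $\psi$ a homomorphism---your formula $\psi(h)=\tfrac12\tau(h^2)$ is not a homomorphism on groups containing reflections, so the commutation argument, not the formula, is what carries this step.

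The genuine gap is in your Baire-category endgame, and you flag it yourself: the ``delicate case'' (torsion elements dense but meagre in some open set, $\D_{\rm nd}$ nowhere dense) is left unresolved, and your proposed fix of running the abelian-closure argument ``locally'' has no visible content---density of torsion elements hands you no element that commutes with, or lies in the closure of, a given hyperbolic element, so there is no homomorphism to which Dudley's theorem could be applied. The case analysis arises only because your complexity computation undersells $\D$: it is not just $\D_{\rm nd}$ that is $G_\delta$, but all of $\D$. Indeed, fixing a neighbourhood basis $\{V_k\}_{k\in\N}$ at the identity,
$$
\D=\bigcap_{k\in \N}\{g\in G\del \e n\geqslant 1\; g^n\in V_k\},
$$
since a finite-order element satisfies each of these conditions via $g^n=e$, while an infinite-order element satisfying all of them generates a non-discrete subgroup. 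Each set in the intersection is open, so $\D$ is $G_\delta$; being somewhere dense, it is somewhere comeagre, and condition (3) of Theorem \ref{equiv} follows immediately from $\D\subseteq E$ for every action---this is exactly the paper's proof. In particular, your delicate case is vacuous: in it $\D$ would be dense, hence comeagre, in some open set, while $\D=\D_{\rm fin}\cup\D_{\rm nd}$ would by assumption be meagre there.
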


Though our main interest is in Polish, i.e., separable, completely metrisable groups, none of our results rely on separability and we have therefore chosen to formulate them in the more general setting.

Based on the results of Alperin and Dudley mentioned above, along with Theorems \ref{equiv} and \ref{somewhere comeagre}, we are led to the following conjecture, which we shall also provide additional evidence for.
\begin{conj}\label{conj}
Any completely metrisable topological group has property (OA).
\end{conj}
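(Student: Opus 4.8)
The plan is to reduce the conjecture, via Theorem \ref{equiv}, to a single configuration that Theorem \ref{somewhere comeagre} leaves untreated. Property (OA) is equivalent to the assertion that \emph{every} action of $G$ without inversion on a tree $X$ satisfies the four conditions of Theorem \ref{equiv}, so it suffices to show that for any such action the set of elliptic isometries is somewhere comeagre, or else $G$ fixes an end; both outcomes yield condition (1). Invoking the Bass--Serre/Tits classification, an action is either elliptic (a vertex is fixed), fixes an end, preserves a line, or is of general type (it contains two independent hyperbolic isometries and hence a non-abelian free subgroup produced by ping-pong). The first two cases give condition (1) at once, and the lineal case reduces, via the translation homomorphism into $\Z$, to Dudley's Theorem \ref{dudley}, whose open kernel is a group of elliptics. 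The entire problem therefore concentrates on excluding a general-type action in which the elliptics fail to be somewhere comeagre.

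The category-theoretic engine I would use is the one already driving the implication $(3)\Rightarrow(1)$ of Theorem \ref{equiv}: as $G$ is completely metrisable it is a Baire space, so any non-meagre set with the Baire property satisfies Pettis' theorem, whence a non-meagre vertex stabiliser is open. The goal thus becomes a clean dichotomy between ``the elliptics are non-meagre'' and ``$G$ fixes an end''. To force the first alternative I would exploit the arithmetic of the amplitude, namely $\|g^n\|=|n|\,\|g\|$ together with the fact that $\|\cdot\|$ takes no value strictly between $0$ and $1$; this ``no small subgroups'' homogeneity is exactly the feature powering Dudley's proof. The aim is to transport his argument from a genuine homomorphism to the conjugacy-invariant, non-homomorphic function $\|\cdot\|$, working inside the closed, hence completely metrisable, subgroup generated by the powers of a category-generic element.

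Theorem \ref{somewhere comeagre} already disposes of the case where $\D$ is somewhere dense: elements of finite order are elliptic because finite subgroups of $\mathrm{Aut}(X)$ fix a vertex, and elements generating non-discrete subgroups can be shown elliptic using completeness rather than any assumed continuity. The residual, genuinely hard, case is that in which $\D$ is nowhere dense, so that a comeagre set of elements generate infinite discrete copies of $\Z$ and are candidates to be hyperbolic. Here I would apply the Kuratowski--Ulam theorem to produce comeagrely many pairs $(g,h)$ and invoke the Tits/Serre lemma --- two elliptics sharing a fixed vertex have an elliptic product, whereas the product of two elliptics is hyperbolic only when their fixed-point sets are disjoint --- to manufacture from generic elements a single invariant object: a fixed vertex, a fixed end, or an invariant line. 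A positive outcome completes the reduction, while a coherent generic family would contradict general type.

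The main obstacle is precisely the failure of subadditivity of the amplitude. Because a product of two elliptic isometries may be hyperbolic, the elliptics do not form a subgroup, and Dudley's kernel-based mechanism does not transfer verbatim. In a general-type action the axes of generic hyperbolic elements need share no endpoint at infinity, so controlling the ``configuration at infinity'' by Baire-category methods alone --- with no continuity of the action available --- is where I expect the argument to stall. Overcoming this would presumably demand either a new homogeneity estimate for $\|\cdot\|$ along commuting families, or a structural input forcing generic axes to be coherent; the absence of such a tool is, I suspect, exactly why the statement remains conjectural.
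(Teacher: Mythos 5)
The statement you are asked to prove is Conjecture \ref{conj}, and the paper itself offers \emph{no} proof of it: it is explicitly left open, with Theorem \ref{somewhere comeagre} (the case where $\D$ is somewhere dense), Theorem \ref{dihedral} (the lineal case) and Theorem \ref{locally compact} (the locally compact case) standing as the only partial results. Your proposal is, by your own admission in the final paragraph, also not a proof; it is a reduction scheme that terminates in the same unresolved case. The reductions you describe are essentially the paper's own: the equivalence of (OA) with the four conditions of Theorem \ref{equiv}, the disposal of vertex-fixing and end-fixing actions, the lineal case (though note your phrasing ``reduces via the translation homomorphism into $\Z$ to Dudley's Theorem'' is not quite right --- a reflection-admitting lineal action has an index-two translation subgroup that need not be open or closed, which is exactly why the paper proves Theorem \ref{dihedral} by \emph{adapting} Dudley's infinite-product argument rather than citing Dudley), and the ellipticity of elements of $\D$ via Lemma \ref{bass}. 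So the genuine gap is precisely the case you name: an action of general type on a group in which $\D$ is nowhere dense, so that no category argument can seed itself on elements known to be elliptic.

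Beyond the admitted stall, there is a concrete structural obstacle to the machinery you propose for that case. Kuratowski--Ulam and Pettis-type arguments require the sets involved (the set of elliptic elements, vertex stabilisers, the pairing $(g,h)\mapsto$ configuration of $X_g$ and $X_h$) to have the Baire property, and with no continuity or definability assumption on the action this simply fails to be available: the paper's Proposition in Section \ref{examples} secures it only for Borel actions of Polish groups on standard Borel trees, where analyticity of the sets $B_n$ does the work. In the abstract setting, condition (3) of Theorem \ref{equiv} (``the elliptics are somewhere comeagre'') is a \emph{hypothesis} of the equivalence, not something one can derive by genericity; your plan implicitly treats it as derivable. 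This is why your final alternative --- ``a coherent generic family would contradict general type'' --- cannot currently be run: one cannot even assert that the hyperbolic elements, or any conjugacy-invariant configuration of their axes, form a set amenable to category methods. Identifying a substitute for the Baire property here (or a new homogeneity estimate for $\|\cdot\|$, as you suggest) is exactly what would be needed to settle the conjecture, and your write-up correctly locates, but does not close, that gap.
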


The paper is organised as follows: In Section \ref{trees and actions} we collect some of the background material on trees and groups acting on them that we will need in the paper. Nothing here is novel and this section can easily be skipped by the reader familiar with Bass--Serre theory. In Section \ref{main}, we prove Theorem \ref{equiv} and finally in Section \ref{non-discrete} we give the proof of Theorem \ref{somewhere comeagre} along with some examples of groups satisfying its premises.

\

\noindent{\bf Acknowledgement:} I wish to thank the anonymous referee for a careful reading and many helpful suggestions.

\section{Actions on trees}\label{trees and actions}
In this section we will recall basic Bass--Serre theory for which our basic reference is Serre's book \cite{serre}. None of the results here are novel and can almost all be found explicitly in \cite{serre}.
\subsection{Graphs and trees}
A graph $X$ consists of a set of vertices, ${\rm Vert}\;X$, and a set of edges, ${\rm Edge}\;X$, along with three functions $o,t\colon {\rm Edge}\; X\til {\rm Vert}\; X$, and $\overline\cdot\colon {\rm Edge}\; \til {\rm Edge}\; X$ such that
$$
o(e)=t(\overline e),
$$
$$
e\neq \overline e,
$$
and 
$$
\overline{\overline e}=e.
$$  
So each edge $e$ comes as a pair, $e$ and its {\em inverse} $\overline e$, whence $e$ is a directed vertex from its {\em origin} $o(e)$ to its {\em terminal} node $t(e)$. 
An {\em edge path} in $X$ is a finite, infinite or biinifinite sequence $(e_n)$ of edges such that $t(e_n)=o(e_{n+1})$ for all $n$. The path is {\em reduced} if for all $n$, $\overline e_n\neq e_{n+1}$. A finite path $(e_1,e_2,\ldots,e_n)$ is a {\em loop} if $o(e_1)=t(e_n)$. 

A {\em tree} $X$ is  a connected graph without non-trivial, reduced loops. Since a tree $X$ is connected and acyclic, it is uniquely path connected, i.e., for any distinct $x,y\in {\rm Vert}\;X$ there is a unique reduced path $(e_1,\ldots,e_n)$ with $o(e_1)=x$ and $t(e_n)=y$. This path, denoted by $x-y$, is called the {\em geodesic} from $x$ to $y$ and lets us define metric on ${\rm Vert}\;X$ by setting
$$
d(x,y)=n.
$$

A {\em vertex path} or {\em line segment} is a finite, infinite or biinfinite sequence $(x_n)$ of vertices of $X$ such that for every $n$ there in an edge $e\in {\rm Edge}\;X$ with $o(e)=x_n$ and $t(e)=x_{n+1}$. A {\em line} in $X$ is a biinfinite vertex path
$$
\ell=(\ldots,x_{-2},x_{-1},x_0,x_1,x_2,\ldots)
$$
of distinct vertices of $X$, and, similarly, a {\em half-line} is an infinite path $\ell^+=(x_0,x_1,x_2,\ldots)$ of distinct vertices. When convenient, we shall identify the geodesic $x-y=(e_1,\ldots,e_n)$ with the corresponding vertex path $(x,t(e_1),t(e_2),\ldots,t(e_n))$. 

An {\em (induced) subgraph} of a graph $X$ is a graph $Y$ such that ${\rm Vert}\; Y\subseteq {\rm Vert}\;X$ and ${\rm Edge}\;Y=\{e\in {\rm Edge}\:X\del o(e),t(e)\in{\rm Vert}\;Y\}$. A subgraph $Y\subseteq X$ of a tree $X$ is said to be a {\em subtree} if, moreover, for any $x,y\in {\rm Vert}\;Y$ the geodesic path $x-y$ is contained in $Y$, i.e., if $Y$ is itself connected. For simplicity of notation, we shall sometimes identify a subgraph with its set of vertices, noticing that its edges can then be read off from the full graph.
It follows from the unique path connectedness of a tree $X$ that if $X_1,\ldots,X_n$ is a finite family of subtrees of $X$ that intersect pairwise, we have
$$
X_1\cap X_2\cap\ldots\cap X_n\neq \tom
$$
(see Lemma 10, p. 65 \cite{serre}). Actually, we shall also need the following more general fact due to J. Tits (see Lemma 1.6 \cite{tits2}). Whenever $(X_i)_{i\in I}$ is a family of subtrees that intersect pairwise, then either $\bigcap_{i\in I}X_i\neq \tom$ or there is an infinite half-line $\ell^+\subseteq X$ that is eventually contained in each $X_i$, i.e., such that for every $i\in I$, $\ell^+\setminus X_i$ is finite.

\subsection{Single automorphisms}
An isometry, or equivalently an automorphism, $g$ of a tree $X$ is said to be {\em without inversion} if there is no edge $e\in {\rm Edge}\:X$ with $g(e)=\overline e$. In this case, there are two possibilities for $g$; either $g$ fixes a vertex of $X$, in which case $g$ is said to be {\em elliptic}, or there is a unique biinfinite line $\ell_g$ on which $g$ acts by non-trivial translation, in which case $g$ is said to be {\em hyperbolic}. For isometries $g$ without inversion, we associate the {\em characteristic subtree} $X_g$ of $X$, which, if $g$ is elliptic, is the non-empty set of vertices fixed by $g$ and, if $g$ is hyperbolic, is its axis of translation, $\ell_g$.

We define the {\em amplitude} of $g$ by the formula
$$
\|g\|=\min\big(d(g(y),y)\del y\in {\rm Vert}\;X\big),
$$
so $g$ is elliptic if and only if $\|g\|=0$.
Now, if $g$ is a hyperbolic isometry of $X$ with characteristic subtree $\ell_g$, then we can write $\ell_g=(\ldots,x_{-2},x_{-1},x_0,x_1,x_2,\ldots)$ such that for some $n\geqslant 1$ and all $m\in \Z$
$$
g(x_m)=x_{m+n}.
$$
In this case, we have $n=\|g\|$ and for any $y\notin \ell_g$, if $x_m$ is the vertex of $\ell_g$ closest to $y$, the geodesic path $y-g(y)$ from $y$ to $g(y)$, is the concatenation of the geodesic paths $y-x_m$, $x_m-x_{m+n}$ and $x_{m+n}-g(y)$.

Suppose $g$ and $h$ are isometries of $X$ without inversion. Then the following equality can easily be checked by inspection  (see, e.g., M. Culler and J. W.  Morgan \cite{culler}).
\begin{quote}
If $X_g\cap X_h=\tom$, then $\|gh\|=\|g\|+\|h\|+2\,{\rm dist}(X_g,X_h)$.
\end{quote}
From this, one immediately obtains what is sometimes referred to as {\em Serre's Lemma} (see Corollary 2, p. 64 \cite{serre}).
\begin{quote}
If $g$, $h$ and $gh$ are all elliptic, then $X_g\cap X_h\neq \tom$.
\end{quote}

We also define the following equivalence relation on the half-lines of $X$. If $\ell^+=(x_0,x_1,x_2,\ldots)$ and $l^+=(y_0,y_1,y_2,\ldots)$ set
$$
\ell^+\sim l^+\equi \e n ,m\; (x_n=y_m\;\&\; x_{n+1}=y_{m+1}\;\&\;\ldots).
$$
So $\ell^+$ and $l^+$ are equivalent if they share a common tail. The equivalence classes of half-lines are called {\em ends} of $X$ and we see that any isometry of $X$ naturally defines a permutation of the set of ends by $g\cdot [\ell^+]=[g\cdot \ell^+]$.

Note that if ${\bf e}=[\ell^+]$, for $\ell^+=(x_0,x_1,\ldots)$, is an end fixed by an isometry $g$ of $X$, we have $\ell^+\sim g\cdot \ell^+$, and so one of the following three things will happen;
\begin{enumerate}
  \item for some $n$ and all $m\geqslant n$, $g(x_m)=x_m$,
  \item for some $n$, $g(\{x_n,x_{n+1},\ldots\})\subseteq \{x_{n+1},x_{n+2},\ldots\}$,
  \item for some $n$, $\{x_n,x_{n+1},\ldots\}\subseteq g(\{x_{n+1},x_{n+2},\ldots\})$.
\end{enumerate}
In the first case, $g$ is elliptic and we say that $\bf e$ is a {\em neutral fixed end} for $g$. In the two other cases, $g$ will be hyperbolic with its axis containing a tail of $\ell^+$. In the second case, we say that $\bf e$ is an {\em attracting fixed end} for $g$, and in the last case, $\bf e$ is a {\em repulsing fixed end} for $g$.

\subsection{Group actions}
If a group $G$ acts without inversion on a tree $X$, we define the quotient graph $G\setminus X$ by
\begin{align*}
&{\rm Vert}\; G\setminus X= \{Gx\del x\in {\rm Vert}\; X\},\\
&{\rm Edge}\; G\setminus X=\{Ge\del e\in {\rm Edge}\; X\},\\
&o(Ge)=Go(e),\\
&t(Ge)=Gt(e), \\
&\overline{Ge}=G\overline e.
\end{align*}

Note that from Serre's Lemma it follows that if $G$ is a group of elliptic isometries, then $X_g\cap X_h\neq \tom$ for all $g,h\in G$. So either $\bigcap_{g\in G}X_g\neq \tom$, in which case $G$ fixes a vertex of $X$, or $\bigcap_{g\in G}X_g=\tom$, in which case it can be checked that $G$ has a neutral fixed end ${\bf e}=[\ell^+]$, $\ell^+=(x_0,x_1,\ldots)$ (see Proposition 3.4 \cite{tits1}). In the second case, letting $G_n=\{g\in G\del \a m\geqslant n\; g(x_m)=x_m\}$, we see that $(G_n)_{n\in \N}$ is an increasing chain of proper subgroups of $G$ whose union is $G$.
Conversely, if $G$ can be written as the union of an increasing chain $(G_n)$ of proper subgroups, we can define a tree $X$ with vertex set 
$$
{\rm Vert}\:X=G/G_0\sqcup G/G_1\sqcup G/G_2\sqcup\ldots
$$
and with edges between the vertices $gG_n$ and $gG_{n+1}$ for $g\in G$ and $n\in \N$. Then $G$ acts without inversion on $X$ by left translation of the cosets $gG_n$ and, since $G=\bigcup_{n\in\N}G_n$, it has a neutral fixed end ${\bf e}=[\ell^+]$, namely $\ell^+=(G_0, G_1,G_2,\ldots)$, but does not  fix a vertex.

This shows that for any group $G$ the following two conditions are equivalent
\begin{itemize}
\item $G$ acts without inversion on a tree with a neutral fixed end, but not fixing a vertex,
\item $G$ is the union of a countable increasing chain of proper subgroups.
\end{itemize}

Now suppose instead  $G$ can be written as a free product with amalgamation $G=A*_CB$. Then we can define a tree $X$ (see Theorem 7, p. 32 \cite{serre}) whose vertex set is the space of left cosets $G/A\sqcup G/B$ and connecting any pair $gA$ and $gB$ by a set of edges. In other words, $gA$ and $hB$ are connected by an edge if and only if $gA\cap hB\neq \tom$. Again, $G$ acts without inversion on $X$ by left-translation of the cosets.
Note also that the stabiliser in $G$ of a vertex $hA$ is just the subgroup $hAh\inv\leqslant G$, while the stabiliser of $hB$ is the subgroup $hBh\inv\leqslant G$. It follows that the stabiliser of the edge between $hA$ and $hB$ is
$$
hAh\inv \cap hBh\inv=h(A\cap B)h\inv=hCh\inv.
$$
Suppose, moreover, that the product is non-trivial and pick $a\in A\setminus B$ and $b\in B\setminus A$.
It follows that the product $ab$ does not belong to any conjugate of either $A$ or $B$ and, in particular, $ab$ does not fix any vertex of $X$. So $ab\in G$ is hyperbolic.

In the course of the proof of Theorem \ref{equiv}, we shall also be needing the following result of Bass-Serre theory (Corollary 1, p. 55 \cite{serre}).
\begin{thm}\label{serre2}
Suppose $G$ is a group acting without inversion on a tree $X$ and let $R\leqslant G$ be the subgroup of $G$ generated by the elliptic isometries. Then $\pi_1(G\setminus X)\iso G/R$. In particular, since $\pi_1(G\setminus X)$ is a free group, either 
\begin{itemize}
\item $G\setminus X$ is a tree, in which case $\pi_1(G\setminus X)=\{1\}$ and $G=R$, or
\item $\pi_1(G\setminus X)\iso G/R$ is a free group of rank $\geqslant 1$, whence $G$ has an infinite cyclic quotient.
\end{itemize}
\end{thm}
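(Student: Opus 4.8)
The plan is to deduce the isomorphism $\pi_1(G\setminus X)\iso G/R$ from the fundamental structure theorem of Bass--Serre theory, which presents $G$ as the fundamental group of a graph of groups, and then to read off $G/R$ directly from that presentation. First I would form the quotient graph $Y=G\setminus X$ and endow it with its canonical graph-of-groups structure: to each vertex $\bar v$ and edge $\bar e$ of $Y$ one assigns the stabiliser $G_v$, respectively $G_e$, of a chosen lift in $X$. The structure theorem then gives an isomorphism $G\iso \pi_1(\mathcal G,Y,T)$ for any spanning tree $T$ of $Y$, and this fundamental group carries the standard presentation: the generators are the elements of the vertex groups $G_v$ together with one symbol $t_e$ per edge (with $t_{\overline e}=t_e\inv$), and the relations are those internal to each $G_v$, the relations $t_e=1$ for $e\in T$, and the edge-conjugation relations $t_e\,c\,t_e\inv=c'$ arising from the two embeddings of each edge group $G_e$ into its endpoint vertex groups.

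The second step is to locate $R$ inside this presentation. Since an isometry is elliptic precisely when it fixes a vertex, the set of elliptic elements is exactly $\bigcup_v G_v$; and because $G_{gv}=gG_vg\inv$, this set is closed under conjugation, so the subgroup $R$ it generates is normal and coincides with the subgroup generated by all the vertex groups. Under the isomorphism above, $R$ is thus the normal subgroup generated by all the $G_v$.

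The third step is the computation of the quotient. Passing to $G/R$ amounts to adjoining the relations $G_v=1$ for every vertex group to the presentation. This collapses every internal relation and trivialises every edge-conjugation relation, leaving exactly the generators $t_e$ subject only to $t_{\overline e}=t_e\inv$ and $t_e=1$ for $e\in T$. This is precisely the standard presentation of the free group on the positively oriented edges of $Y$ lying outside $T$, i.e., of $\pi_1(Y)=\pi_1(G\setminus X)$, which is free since the fundamental group of any connected graph is free. Hence $G/R\iso\pi_1(G\setminus X)$.

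The stated dichotomy then follows at once: if $G\setminus X$ is a tree, then $T$ exhausts its edges, so $\pi_1(G\setminus X)=\{1\}$ and $G=R$; otherwise there is at least one edge outside $T$, so $\pi_1(G\setminus X)$ is free of rank $\geqslant 1$ and admits a surjection onto $\Z$ (send one free generator to $1$ and the remaining generators to $0$), yielding an infinite cyclic quotient of $G$. The main obstacle is concentrated entirely in the first step: the structure theorem together with the explicit presentation of the fundamental group of a graph of groups is the substantial input, and once it is in hand the remaining steps reduce to a routine manipulation of the presentation.
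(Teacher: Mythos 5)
Your proof is correct. The paper does not prove this statement itself --- it imports it as Corollary 1, p.~55 of Serre's \emph{Trees} --- and the proof in that cited source is essentially the argument you give: invoke the structure theorem to identify $G$ with the fundamental group of the quotient graph of groups, note that $R$ is the normal closure of the vertex groups, and kill the vertex groups in the standard presentation, leaving the free group $\pi_1(G\setminus X)$ on the edges outside a spanning tree.
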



\section{Actions of completely metrisable and locally compact groups}
\subsection{Discontinuous and definable actions}\label{examples}
Before we begin the proof of Theorem \ref{equiv}, we shall first provide some examples providing empirical evidence for Conjecture \ref{conj}.
At the same time, these examples delimit the type of continuity that one can hope for in general.

\begin{itemize}
\item A completely metrisable, locally compact group acting without inversion with no open vertex stabiliser.
\end{itemize}
To construct such an example, let $G$ be the union of a countable chain of non-open proper subgroups 
$$
G_0<G_1<G_2<\ldots<G=\bigcup_{n\in \N}G_n.
$$ 
Then when $G$ acts on the corresponding tree $X$ with vertex set 
$$
{\rm Vert}\:X=G/G_0\sqcup G/G_1\sqcup G/G_2\sqcup\ldots
$$
the vertex stabilisers are simply conjugates of the groups $G_n$, none of which are open. For example, if $B$ is a Hamel basis for $\R$ as a $\Q$-vector space, write $B$ as the union of a strictly increasing chain of proper subsets $B_n$. Then if $G_n$ is the $\Q$-vector subspace generated by $B_n$, the $G_n$ form an increasing, exhaustive chain of proper non-open subgroups of $G$. However, in this case, there is a neutral fixed end for $G$.

\begin{itemize}
\item A completely metrisable, locally compact  group acting without inversion, neither fixing an end nor having open vertex stabilisers.
\end{itemize}
For this example, let $G$ be non-discrete  and let $X$ be the tree with vertices $\{*\}\cup G$, where $*$ is a point not in $G$, and equip $X$ with edges between $*$ and $g$ for all $g\in G$. Then $G$ acts without inversion on $X$ by fixing $*$ and otherwise acting by left translation on the vertices $g\in G$.
Thus, since $G$ is not discrete, the stabilisers ${\rm stab}_G(g)=\{1\}$ fail to be open. Moreover, there is no end to fix. Nevertheless, in this example, $G$ has an open subgroup fixing a vertex of $X$, namely $*$. In the concrete example, $G$ is itself the open subgroup in question, but this can also be avoided with more care.

\begin{itemize}
\item $\Z^\N$ has no open subgroup with property (FA').
\end{itemize}
We recall that a group has property (FA') if whenever it acts without inversion on a tree, every element is elliptic. Since any open subgroup of $\Z^\N$ admits an epimorphism onto $\Z$, no open subgroup satisfies property (FA'). As we shall see later, this distinguishes completely metrisable groups from locally compact groups.

\

For completeness, we show that in the definable setting, some of the above discontinuities disappear (see \cite{kechris} for the basic concepts and results of descriptive set theory).
\begin{prop}
Suppose $X$ is a standard Borel tree, i.e., ${\rm Vert}\; X$ and ${\rm Edge}\:X$ are standard Borel spaces and the functions $o,t, \overline\cdot$ are Borel.
Let $G$ be a Polish, i.e., separable, completely metrisable group acting without inversion on $X$ such that the action  $G\curvearrowright {\rm Vert}\;X$ is Borel. Then there is an open subgroup $H\leqslant G$ fixing a vertex.
\end{prop}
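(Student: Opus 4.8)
The plan is to reduce the statement, via Pettis' theorem, to the purely category-theoretic assertion that the elliptic elements are non-meager, and then to convert non-meagreness into an \emph{open} vertex stabiliser using Theorem~\ref{equiv} together with the Borel structure of the stabiliser chain along a fixed end.

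First I would observe that for each vertex $v\in{\rm Vert}\,X$ the orbit map $g\mapsto g\cdot v$ is Borel and $\{v\}$ is Borel, so the stabiliser $H_v=\{g\in G\del g\cdot v=v\}$ is a Borel subgroup of $G$. Recall Pettis' theorem: a subgroup of a Polish group that has the Baire property and is non-meager is open (indeed clopen). Hence it suffices to exhibit a single vertex whose stabiliser is non-meager. Writing $E=\{g\in G\del \e v\; g\cdot v=v\}$ for the set of elliptic elements, this is exactly the requirement that $E$ be non-meager: if some $H_v$ is non-meager we are done, and conversely an open $H_v$ would force $E\supseteq H_v$ to be non-meager. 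Note that $E$ is the projection of the Borel set $\{(g,v)\del g\cdot v=v\}$, hence analytic, and so has the Baire property.

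Granting that $E$ is non-meager, I would finish as follows. By Theorem~\ref{equiv} (property~(3) implies property~(1)), $G$ either has an open subgroup fixing a vertex---in which case we are done---or $G$ fixes an end ${\bf e}=[\ell^+]$ with $\ell^+=(x_0,x_1,x_2,\ldots)$. In the latter case, set $G_n=\bigcap_{m\geqslant n}H_{x_m}$, a Borel subgroup fixing the vertex $x_n$. Since $G$ fixes ${\bf e}$, every elliptic $g\in G$ fixes ${\bf e}$, and an elliptic isometry fixing an end does so neutrally, i.e.\ fixes a tail of $\ell^+$; thus $E\subseteq\bigcup_{n}G_n$. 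As $E$ is non-meager and the union is countable, some $G_n$ is non-meager, hence open by Pettis, and it fixes $x_n$. Either way we obtain the desired open subgroup fixing a vertex.

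The hard part will be showing that $E$ is non-meager, equivalently ruling out that a comeagre set of elements are hyperbolic, and I expect this to be the main obstacle. Here I would use that the amplitude $\|\cdot\|\colon G\til\N$ has the Baire property: each $\{g\del \|g\|\leqslant n\}$ is the projection of the Borel set $\{(g,y)\del d(g(y),y)\leqslant n\}$, hence analytic. Partitioning $G=\bigsqcup_{n}\{g\del \|g\|=n\}$ into sets with the Baire property, Baire category yields an integer $n_0$ and a non-empty open $V$ with $\{g\del \|g\|=n_0\}$ comeagre in $V$; if $n_0=0$ then $E$ is non-meager and we are done. To exclude $n_0\geqslant 1$ I would try to exploit the additivity of amplitude on disjoint characteristic subtrees, namely that $X_g\cap X_h=\tom$ implies $\|gh\|=\|g\|+\|h\|+2\,{\rm dist}(X_g,X_h)\geqslant 2n_0+2$, combined with Kuratowski--Ulam and the homogeneity of left translation, in order to produce on some open set a comeagre family of products whose amplitudes exceed the generic value there, a contradiction; the delicate point is arranging the products to land in a region where the generic amplitude is controlled. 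As an auxiliary lever on the hyperbolic case one can feed Theorem~\ref{serre2} into Dudley's Theorem~\ref{dudley}: if $G\setminus X$ is not a tree then $G$ has an infinite cyclic quotient, which Theorem~\ref{dudley} forces to be continuous, so that the subgroup $R=\langle E\rangle$ generated by the elliptic elements is open, and one may then pass to this open subgroup; the remaining case $G=R$ means $G$ is generated by its elliptic elements.
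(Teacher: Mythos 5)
Your reduction is sound as far as it goes: since $E=\{g\in G\del \e v\; g\cdot v=v\}$ is analytic, non-meagreness of $E$ does give that $E$ is comeagre in some open set, so Theorem \ref{equiv} (3)$\saa$(1) applies; and in the end-fixing case your chain $G_n=\bigcap_{m\geqslant n}H_{x_m}$ of Borel subgroups covering $E$, plus Pettis' theorem, does produce an open vertex stabiliser (this use of Theorem \ref{equiv} is not circular, as the paper's proof of that theorem does not depend on the proposition). But the entire content of the proposition is the step you defer, namely that $E$ is non-meagre, and what you sketch for it does not close. The obstruction is exactly your ``delicate point'': the amplitude is \emph{not} subadditive on a tree --- two elliptic elements with disjoint characteristic subtrees compose to a hyperbolic element of amplitude $2\,{\rm dist}(X_g,X_h)$, so $\|gh\|$ is not controlled by $\|g\|$ and $\|h\|$ --- hence neither Pettis applied to the level set $\{g\del\|g\|=n_0\}$ nor Kuratowski--Ulam on products tells you where products land or what their generic amplitude is. Your auxiliary lever via Theorem \ref{serre2} and Dudley's theorem only shows that $R=\langle E\rangle$ is open; a meagre set can perfectly well generate an open (even the whole) group, so this says nothing about the category of $E$.

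The paper closes precisely this gap by working not with the amplitude but with the displacement function $g\mapsto d(gx_0,x_0)$ for a fixed vertex $x_0$, which, unlike the amplitude, \emph{is} subadditive: $d(ghx_0,x_0)\leqslant d(gx_0,x_0)+d(hx_0,x_0)$. The sets $B_n=\{g\in G\del d(gx_0,x_0)\leqslant n\}$ are analytic and cover $G$, so some $B_n$ is non-meagre; by Pettis, $B_nB_n\inv=B_nB_n\subseteq B_{2n}$ is a neighbourhood of $1$, and continuity of multiplication yields an open $U\ni 1$ with $U^{2n+1}\subseteq B_{2n}$. If some $g\in U$ were hyperbolic, then $d(g^{2n+1}x_0,x_0)\geqslant\|g^{2n+1}\|=(2n+1)\|g\|>2n$, contradicting $g^{2n+1}\in B_{2n}$. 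Thus \emph{every} element of $U$ is elliptic --- far stronger than non-meagreness of $E$ --- and with $W\ni 1$ open, $W^2\subseteq U$, Serre's Lemma makes the trees $X_g$, $g\in W$, pairwise intersecting while each satisfies $d(x_0,X_g)\leqslant n$; the Helly/Tits intersection property then gives $\bigcap_{g\in W}X_g\neq\tom$, and the open subgroup generated by $W$ fixes any vertex in this intersection. Note that this argument needs no appeal to Theorem \ref{equiv} at all. If you want to salvage your outline, replace the amplitude level sets by the displacement sets $B_n$; that single substitution is what makes the Pettis/power argument go through.
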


\begin{proof}Fix some vertex $x_0\in {\rm Vert}\;X$. 
We note that for any $n$,
\begin{align*}
B_n=&\{g\in G\del d(gx_0,x_0)\leqslant n\}\\
=&\bigcup_{m\leqslant n}\{g\in G\del \e e_1,e_2,\ldots,e_m\in {\rm Edge}\:X\; t(e_i)=o(e_{i+1})\;\&\; x_0=o(e_1)\;\&\; gx_0=t(e_m)    \}
\end{align*}
is analytic. Since $\bigcup_nB_n=G$, some $B_n$ is non-meagre, whence by Pettis' Theorem (see Theorem (9.9) in \cite{kechris}) $B_nB_n\inv=B_nB_n\subseteq B_{2n}$ is a neighbourhood of the identity in $G$. Now pick some neighbourhood $U$ of the identity in $G$ such that $U^{2n+1}\subseteq B_{2n}$. Then, every element of $U$ is elliptic. For otherwise, if $g\in U$ is hyperbolic of amplitude $\|g\|>0$, then $g^{2n+1}\in U^{2n+1}$ is hyperbolic of amplitude $>2n$, whence $d(gx_0,x_0)>2n$, contradicting $g\in B_{2n}$. Now pick a neighbourhood $W\subseteq U$ of the identity such that $W^2\subseteq U$. Then every element of $W$ is elliptic and, moreover, if $g,f\in W$, then also $gf\in U$ is elliptic. By Serre's Lemma, $X_g\cap X_f\neq \tom$ for any $g,f\in W$. Also,  if $g\in W$, then, since $d(gx_0,x_0)\leqslant 2n$, we have $d(x_0,X_g)\leqslant n$. It follows that $\bigcap_{g\in W}X_g\neq \tom$, and so, if $H$ is the open subgroup of $G$ generated by $W$, $H$ fixes any vertex in $\bigcap_{g\in W}X_g$.
\end{proof}


\subsection{Actions on a line}
Dudley's Theorem \ref{dudley} in particular implies that if a completely metrisable or locally compact group $G$ acts by translations on a biinfinite line $\ell$, then the kernel of the action is open in $G$. Now, if we also allow reflections, this no longer holds, since $G$ could have a non-open subgroup $H\leqslant G$ of index $2$, whence $G/H\iso \Z_2$ and hence also $G$ act by reflections on $\ell$, the latter with kernel $H$. Nevertheless, as we shall see below,  the proof of Dudley's result can be adapted to show that there is always an open subgroup fixing a vertex. 

Recall that the {\em infinite dihedral group} $D_\infty$ is the group
$$
D_\infty=\Z_2*\Z_2=\langle a,b\del a^2=b^2=1\rangle.
$$
We can also see $D_\infty$ as the group of all automorphisms of the biinfinite line $\ell=(\ldots,x_{-1},x_0,x_1,\ldots)$ generated by a reflection $a=R_0$ around $x_0$ and a reflection $b=R_{1/2}$ around the midpoint between $x_0$ and $x_1$. Note also that if we instead let $b$ be the reflection $R_1$ around $x_1$, we obtain an action of $D_\infty$ without inversion on $\ell$. We thus have a bijective correspondence between actions without inversion on $\ell$ and homomorphisms into $D_\infty$.

\begin{thm}\label{dihedral}
Let $G$ be a completely metrisable  group acting without inversion on the biinfinite line $\ell$. Then there is an open subgroup fixing a vertex.
\end{thm}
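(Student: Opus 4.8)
The plan is to pass from the action to a homomorphism and then isolate the single place where completeness is genuinely needed. As recorded in the discussion above, an action of $G$ without inversion on $\ell$ is the same thing as a homomorphism $\pi\colon G\til D_\infty$, where $D_\infty=\langle a,b\del a^2=b^2=1\rangle$ acts on $\ell$ with $a=R_0$ and $b=R_1$. Writing $T\leqslant D_\infty$ for the index-two subgroup of translations and $H=\pi\inv(T)$, the amplitude of $g$ is simply $\|g\|=\|\pi(g)\|$, so $g$ is elliptic exactly when $\pi(g)$ is a reflection or the identity, and hyperbolic exactly when $g\in H$ and $\pi(g)\neq 1$. I would first reduce the theorem to the single assertion that \emph{the elliptic elements form a neighbourhood of the identity}. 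Indeed, given such a neighbourhood $U$, pick a symmetric $W$ with $W^2\subseteq U$; then for $g,f\in W$ the isometries $g,f,gf$ are all elliptic, so Serre's Lemma gives $X_g\cap X_f\neq\tom$. On the line $\ell$ the characteristic set of an elliptic isometry is either all of $\ell$ (for $g\in\ker\pi$) or a single vertex (for a reflection), so pairwise intersection forces all reflections occurring in $W$ to fix one common vertex $x_k$; hence $\langle W\rangle$ is an open subgroup fixing $x_k$, exactly as in the Proposition above.

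Next I would dispose of the easy cases by a direct appeal to Dudley's Theorem~\ref{dudley}. If $\pi(G)\subseteq T$, then $\pi$ is a homomorphism into $T\iso\Z$, so $\ker\pi$ is open and fixes every vertex. If $\pi(G)$ contains a reflection but no non-trivial translation, then $\pi(G)=\{1,R\}\iso\Z_2$ and all of $G$ fixes the vertex fixed by $R$. The first genuinely two-sided case is $\pi(G)\iso D_\infty$, and here everything turns on $H$. If $H$ is \emph{closed}, it is a completely metrisable, index-two (hence open) subgroup, and $\chi:=\pi\begr H\colon H\til T\iso\Z$ is a homomorphism; Dudley's Theorem then makes $\ker\chi=\ker\pi$ open in $H$, hence in $G$, and again $\ker\pi$ fixes every vertex.

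The one remaining configuration is that $H$ is a \emph{dense} (necessarily non-closed, non-$G_\delta$) subgroup of index two; this is where I expect the real difficulty to lie, and where Dudley's theorem cannot be quoted because $H$ is not completely metrisable. Here I would instead re-run the completeness argument underlying Dudley's proof inside the ambient complete group $G$. Assuming, towards a contradiction, that the elliptic elements are not a neighbourhood of $1$, one extracts hyperbolic $g_n\til 1$; these lie in $H$ and translate along the unique line $\ell$, so each carries a non-zero translation number $\chi(g_n)$. Replacing $g_n$ by a power $g_n^{k_n}$ and using continuity of $x\mapsto x^{k_n}$ at $1$, I would arrange the $\chi(g_n)$ to be positive and divisible by $n!$, while keeping $g_n\til 1$ fast enough (with respect to a fixed complete metric) that every tail product converges. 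The partial products $p_N=g_1\cdots g_N$ lie in $H$ with $\chi(p_N)=\sum_{n\leqslant N}\chi(g_n)\til\infty$, whereas the factorial divisibility pins the translation numbers of the convergent tails modulo arbitrarily large factorials; playing these two facts against the finiteness of $\|w\|$ for the limit $w=\prod_n g_n$ (in the spirit of the Specker--Dudley divisibility trick, using two coprime growth rates to force the relevant translation number to vanish) should yield the contradiction.

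The principal obstacle is exactly this last step. Because $H$ is only dense, the limit $w$ and the tails need not lie in $H$, so $\chi$ is not defined on them and the bookkeeping of the divisibility argument becomes delicate. I would handle this either by arranging the construction to evaluate $\chi$ only on elements verifiably in $H$ (for instance by working with the squares $g_n^2\in H$, and noting that any stray tail landing outside $H$ is automatically an elliptic reflection and hence fixes a vertex rather than obstructing the argument), or by carrying out the whole construction inside a suitable closed, separable---hence completely metrisable---subgroup $L\leqslant G$ containing the $g_n$, chosen so that $L\cap H$ is closed in $L$ and Dudley's Theorem applies to $L$ directly. Making one of these reductions rigorous is the crux of the proof.
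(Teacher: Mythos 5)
Your reduction of the theorem to the assertion that the elliptic elements form a neighbourhood of the identity is correct, and your two easy cases ($\pi(G)\leqslant T$, and $H=\pi\inv(T)$ closed of index two) are handled properly by Dudley's Theorem. But the proof is incomplete exactly where you admit it is: the case of a dense, non-closed $H$ is not a fringe configuration to be patched afterwards --- it is the entire content of the theorem --- and neither of your proposed repairs works as stated. Repair (ii) is question-begging: $L\cap H$ has index at most two in $L$, hence is either closed in $L$ or dense in $L$, and nothing allows you to choose a closed $L$ containing the $g_n$ of the first kind; indeed, if $G$ is Polish with $H$ dense, then $L=G$ is a legitimate choice of ambient closed separable subgroup and it fails, and any smaller $L$ with $L\cap H$ dense in $L$ just reproduces the original problem. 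Repair (i) contains the right germ (even powers kill reflections), but your stated justification --- that a stray tail outside $H$ ``is an elliptic reflection and hence fixes a vertex rather than obstructing the argument'' --- misses the actual difficulty: the obstruction is that $\chi$ is undefined on such a tail, so the factorial-divisibility bookkeeping of your Specker-style argument simply cannot be evaluated there, and the fact that the tail fixes a vertex does not restore it.

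The paper resolves precisely this difficulty by abandoning $\chi$ altogether and running the convergent infinite-product argument purely in terms of the amplitude $\|\cdot\|\colon G\til\N$, which is defined on all of $G$. Assuming no open subgroup fixes a vertex, one first shows every neighbourhood of $1$ contains hyperbolic elements of arbitrarily large amplitude, then chooses hyperbolic $g_m\til 1$ together with \emph{even} exponents $k_m$ satisfying $k_m<\|g_{m+1}\|$ and $k_m=m+\sum_{i=1}^m\|g_i\|$, fast enough that the tails $y_m=g_my_{m+1}^{k_m}$ exist in $G$. A two-case analysis then replaces your divisibility argument: if the tail $y_{m+1}$ is elliptic (a reflection or trivial on $\ell$), the even power $y_{m+1}^{k_m}$ acts \emph{trivially} on $\ell$, so $\|y_m\|=\|g_m\|>k_{m-1}$; if $y_{m+1}$ is hyperbolic, then $\|y_m\|\geqslant k_m\|y_{m+1}\|-\|g_m\|\geqslant k_m-\|g_m\|>k_{m-1}$. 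Either way $\|y_{m+1}\|>k_m$, and telescoping yields
$$
\|y_1\|\geqslant \|y_{m+1}\|-\sum_{i=1}^m\|g_i\|>k_m-(k_m-m)=m
$$
for every $m$, contradicting finiteness of $\|y_1\|$. So your suggestion (i) can be made rigorous, but the operative mechanism is that even powers of elliptic isometries of a line act trivially; once this is in hand, the homomorphism $\chi$, the factorials, and the coprime growth rates are all unnecessary.
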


\begin{proof}
Suppose first that any open $V\ni 1$ contains a hyperbolic element. Let also $U\ni 1$ be open and $n\geqslant 1$. Pick then  $V\ni 1$ open such that $V^n\subseteq U$. So if $g\in V$ is hyperbolic, $g^n\in U$ is hyperbolic of amplitude $\|g\|\geqslant  n$.
Suppose now instead that for any open $V\ni 1$ there are elliptic $g,h\in V$ with no common fixed point. Then for such $g,h\in V$, $gh\in V^2$ is hyperbolic. It follows from these considerations that if no open subgroup of $G$ fixes a vertex, then any open neighbourhood of $1$ contains hyperbolic elements of arbitrarily large amplitude.

So suppose towards a contradiction that $G$ has no open subgroup fixing a vertex and find a sequence $g_i\in G$ converging to $1$ such that, on the other hand, $\|g_i\|\til \infty$. By passing to a subsequence, we can suppose that there are {\em even} numbers $k_i\geqslant 1$ such that the infinite products
$$
y_m=g_m(g_{m+1}(g_{m+2}(\ldots)^{k_{m+2}})^{k_{m+1}})^{k_m}\in G
$$
exist and the following are satisfied
\begin{enumerate}
  \item $k_m<\|g_{m+1}\|$,
  \item $y_m=g_my_{m+1}^{k_m}$,
  \item $k_m=m+\sum_{i=1}^m\|g_i\|$.
\end{enumerate}
Now, if $y_{m+1}$ elliptic, then $y_{m+1}^2$ acts trivially on $\ell$, whence, as $k_m$ is even,
$$
\|y_m\|=\|g_my_{m+1}^{k_m}\|=\|g_m\|>k_{m-1}.
$$
On the other hand,  if $y_{m+1}$ is hyperbolic, then, since $g_m$ is hyperbolic too,
$$
\|y_m\|=\|g_my_{m+1}^{k_m}\|\geqslant k_m\cdot\|y_{m+1}\|-\|g_m\|\geqslant k_m-\|g_m\|>k_{m-1}.
$$
It follows that for all $m$,
\begin{displaymath}\begin{split}
\|y_1\|&=\|g_1y_2^{k_1}\|\\
&\geqslant \|y_2^{k_1}\|-\|g_1\|\\
&\geqslant \|y_2\|-\|g_1\|\\
&=\ldots\\
&\geqslant \|y_{m+1}\|-\sum_{i=1}^m\|g_i\|\\
&>k_m-(k_m-m)\\
&=m,
\end{split}\end{displaymath}
contradicting that $\|y_1\|$ is finite.
\end{proof}


\subsection{Proof of Theorem \ref{equiv}}\label{main}
For the proof of Theorem \ref{equiv}, we shall need the following basic calculation (this is  (6.14) in  Alperin and Bass \cite{alperin-bass}).
\begin{lemme}\label{amplitude}
Suppose a group $G$ acts without inversion on a tree $X$ and let $x\in {\rm Vert}\:X$. Then the amplitude $\|\cdot\|\colon G\til \N$ is given by
$$
\|g\|=\max\big\{0, \;d(x, g^2(x))-d(x,g(x))\big\}.
$$
\end{lemme}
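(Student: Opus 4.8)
The plan is to prove the formula $\|g\|=\max\{0,\,d(x,g^2(x))-d(x,g(x))\}$ by splitting into the two cases according to whether $g$ is elliptic or hyperbolic, and in each case computing the right-hand side directly from the geometry of the geodesics $x-g(x)$ and $x-g^2(x)$. First I would record the key observation that any isometry $g$ sends the geodesic $x-g(x)$ to the geodesic $g(x)-g^2(x)$, so these two geodesics have the same length $d(x,g(x))=d(g(x),g^2(x))$, and their concatenation is a (not necessarily reduced) path from $x$ to $g^2(x)$.

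In the \emph{elliptic} case, $\|g\|=0$, so I must show the right-hand side is $\leqslant 0$, i.e.\ $d(x,g^2(x))\leqslant d(x,g(x))$. Let $p$ be a vertex fixed by $g$ (using that the fixed set $X_g$ is non-empty), and let $c$ be the vertex on the geodesic $x-p$ closest to $g(x)$; more cleanly, I would use the fact that in a tree the distance to a fixed point controls things: since $d(g(x),p)=d(x,p)$, the midpoint behaviour of geodesics in a tree forces the geodesic $x-g^2(x)$ to be no longer than $x-g(x)$. Concretely, let $m$ be the median of $x$, $g(x)$, $p$; then $g(m)$ is the median of $g(x)$, $g^2(x)$, $p$, and a short tree computation shows $d(x,g^2(x))=d(x,m)+d(m,g(m))+d(g(m),g^2(x))\leqslant d(x,g(x))$, giving the desired inequality, so that $\max\{0,\ldots\}=0=\|g\|$.

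In the \emph{hyperbolic} case, let $\ell_g$ be the axis, let $y$ be the vertex of $\ell_g$ closest to $x$, and set $\delta=d(x,\ell_g)=d(x,y)$. Using the decomposition of $x-g(x)$ described in the excerpt as the concatenation of $x-y$, the segment $y-g(y)$ along the axis of length $\|g\|$, and $g(y)-g(x)$ of length $\delta$, I get $d(x,g(x))=2\delta+\|g\|$. The same reasoning applied to $g^2$, whose axis is also $\ell_g$ with translation length $2\|g\|$, yields $d(x,g^2(x))=2\delta+2\|g\|$. Subtracting gives $d(x,g^2(x))-d(x,g(x))=\|g\|>0$, so the maximum equals $\|g\|$, as required.

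The main obstacle I expect is the elliptic case: one must rule out that $g^2(x)$ lies \emph{farther} from $x$ than $g(x)$ does, and this requires genuinely using that the ambient graph is a tree (the inequality can fail in a general graph). I would handle this most robustly via the median/projection argument onto the fixed subtree $X_g$: projecting $x$ to its nearest point $z\in X_g$, one has $g(x)$ projecting to the same $z$ (since $g$ fixes $X_g$ pointwise and acts as an isometry), and the tree structure then forces the geodesic $x-g^2(x)$ to pass through $z$ with $d(x,g^2(x))=d(x,z)+d(z,g^2(x))=2\,d(x,z)$, while $d(x,g(x))=2\,d(x,z)$ as well, giving equality rather than strict inequality and confirming $d(x,g^2(x))-d(x,g(x))=0$. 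Verifying these projection identities carefully is the one place where the argument is not purely formal.
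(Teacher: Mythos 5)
Your hyperbolic case is correct and is essentially the paper's own argument: project $x$ to the axis $\ell_g$, use $d(x,g(x))=2\delta+\|g\|$ and, since $\ell_{g^2}=\ell_g$ with translation length $2\|g\|$, also $d(x,g^2(x))=2\delta+2\|g\|$, then subtract. The elliptic case, however, contains a genuine error. You claim that the geodesic from $x$ to $g^2(x)$ passes through the projection $z$ of $x$ onto $X_g$, so that $d(x,g^2(x))=2\,d(x,z)=d(x,g(x))$, ``giving equality'' and hence a difference of exactly $0$. This is false. Take the tree $a-p-b$ (one central vertex $p$, two leaves), let $g$ swap $a$ and $b$ while fixing $p$, and let $x=a$. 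Then $z=p$ and $d(x,g(x))=2$, but $g^2=1$, so $d(x,g^2(x))=0$: the geodesic from $x$ to $g^2(x)$ is trivial and certainly does not pass through $z$. The same example refutes the equality in your median computation, $d(x,g^2(x))=d(x,m)+d(m,g(m))+d(g(m),g^2(x))$ (here $m=p$, $g(m)=m$, and the right-hand side equals $2$ while the left-hand side is $0$). In general, two points of a tree that project to the same point of a subtree do \emph{not} have their geodesic pass through that point. Note also that your conclusion is at odds with the very shape of the formula you are proving: if the difference were always $0$ for elliptic $g$, the $\max\{0,\cdot\}$ in the statement would be superfluous; it is there precisely because $d(x,g^2(x))-d(x,g(x))$ can be strictly negative.

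The repair is to use your concatenation only as an \emph{upper bound}, which is exactly what the paper does. Since $g$ fixes $z$, one has $d(z,g^2(x))=d(g^2(z),g^2(x))=d(z,x)$, so the concatenation of $x-z$ and $z-g^2(x)$ is a path (not necessarily geodesic) of length $2\,d(x,z)$, and the triangle inequality gives $d(x,g^2(x))\leqslant 2\,d(x,z)$. Combined with the genuine equality $d(x,g(x))=2\,d(x,z)$ --- which holds because the geodesic $x-g(x)$ \emph{does} pass through $z$ (the standard fact that for an elliptic automorphism acting without inversion the midpoint of $x-g(x)$ is a fixed vertex, equivalently the projection of $x$ to $X_g$; this deserves a proof or citation, as it is the one place ``without inversion'' enters) --- one gets $d(x,g^2(x))\leqslant d(x,g(x))$, hence $\max\bigl\{0,\;d(x,g^2(x))-d(x,g(x))\bigr\}=0=\|g\|$, as required.
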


\begin{proof}
Fix $x\in {\rm Vert}\;X$. Suppose $g\in G$ is hyperbolic. Then as can be seen from Figure \ref{isometric}, we have 
$\|g\|=d(x,g^2(x))-d(x,g(x))$.

On the other hand, if $g$ is elliptic, let $y\in X_g$ be the vertex in $X_g$ closest to $x$. Then, the geodesic path from $x$ to $g(x)$ is the concatenation of $x-y$ and $y-g(x)$, while, on the other hand, the concatenation of $x-y$ and $y-g^2(x)$ also provides a path from $x$ to $g^2(x)$, though perhaps not the shortest (see Figure \ref{isometric}). Since $d(y,x)=d(y,g(x))=d(y,g^2(x))$, we see that
$$
d(x,g^2(x))\leqslant d(x,g(x)).
$$
In any case, 
$$
\|g\|=\max\big\{0, \;d(x, g^2(x))-d(x,g(x))\big\}.
$$

\setlength{\unitlength}{.5cm}
\begin{figure}
\begin{picture}(20,9.5)(0,-4)

\put(-2,-2){\vector(1,0){10}}
\put(0,-2){\line(0,1){1.5}}
\put(3,-2){\line(0,1){1.5}}
\put(6,-2){\line(0,1){1.5}}
\put(8.5,-2.2){$\ell_g$}
\put(-.16,-0.7){$\bullet$}
\put(2.86,-.7){$\bullet$}
\put(5.86,-.7){$\bullet$}
\put(-.2,-.2){${x}$}
\put(2.7,-.2){$ g(x)$}
\put(5.7,-.2){$ g^2(x)$}

\put(2,-4){$g$ \text{hyperbolic}}
\put(14.5,-4){$g$ \text{elliptic}}

\put(15,-2){$\bullet$}
\put(15.16,-1.85){\line(-1,2){1.7}}
\put(15.16,-1.85){\line(1,2){1.7}}

\put(15.2,1.6){\line(-1,-2){.87}}
\put(14.5,2){$g^2(x)$}
\put(15,1.35){$\bullet$}

\put(13.35,1.35){$\bullet$}

\put(16.7,1.35){$\bullet$}

\put(15,-2.5){$y\in X_g$}
\put(13.2,2){$x$}

\put(16.6,2){$g(x)$}
\end{picture}
\caption{}\label{isometric}
\end{figure}

\end{proof}

\begin{cor}\label{continuous amplitude}
Let $G$ be a topological group acting without inversion on a tree $X$. Suppose that for some $x\in {\rm Vert}\;X$ the map 
$$
g\in G\mapsto d(x,g(x))\in \N
$$
is continuous. Then the amplitude $\|\cdot\|\colon G\til \N$ is continuous.
\end{cor}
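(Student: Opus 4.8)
The plan is to derive everything directly from the formula furnished by Lemma \ref{amplitude}, which expresses the amplitude purely in terms of the displacement function $g\mapsto d(x,g(x))$ evaluated at $g$ and at $g^2$. Concretely, for the fixed vertex $x\in{\rm Vert}\;X$ we have
$$
\|g\|=\max\big\{0,\;d(x,g^2(x))-d(x,g(x))\big\},
$$
so it suffices to show that each term on the right-hand side is a continuous $\Z$-valued function of $g$ and that the operations combining them preserve continuity. Throughout I regard $\N$ (and $\Z$) as carrying the discrete topology, so that "continuous" simply means "locally constant"; this is the sense in which the hypothesis $g\mapsto d(x,g(x))$ is continuous should be read.

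First I would record that the squaring map $s\colon G\til G$, $s(g)=g^2=g\cdot g$, is continuous: it is the composite of the diagonal $g\mapsto(g,g)$ with the multiplication $G\times G\til G$, both of which are continuous since $G$ is a topological group. Writing $\phi(h)=d(x,h(x))$ for the displacement function, the hypothesis says exactly that $\phi$ is continuous, and therefore the composite $\phi\circ s\colon g\mapsto \phi(g^2)=d(x,g^2(x))$ is continuous as well. Thus both $g\mapsto d(x,g(x))$ and $g\mapsto d(x,g^2(x))$ are continuous maps into the discrete space $\Z$.

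It then remains only to observe that the two arithmetic operations applied to these functions are continuous on the discrete integers. The difference of two continuous (locally constant) $\Z$-valued functions is again locally constant, hence continuous; and the map $\Z\til\N$, $k\mapsto\max\{0,k\}$, is automatically continuous for the discrete topologies. Composing, $g\mapsto\max\{0,\,d(x,g^2(x))-d(x,g(x))\}=\|g\|$ is continuous, as desired. I do not anticipate a genuine obstacle here: the entire content lies in Lemma \ref{amplitude}, and the only point requiring a moment's care is to apply the hypothesis at the element $g^2$ rather than at $g$, which is legitimate because the assumed continuity of $\phi$ is a global statement about the single function $h\mapsto d(x,h(x))$, composed with the continuous squaring map.
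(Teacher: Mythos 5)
Your proof is correct and is exactly the argument the paper intends: the corollary is stated immediately after Lemma \ref{amplitude} precisely because it follows by composing the displacement function with the continuous squaring map $g\mapsto g^2$ and applying the formula $\|g\|=\max\{0,\,d(x,g^2(x))-d(x,g(x))\}$, with continuity into the discrete space $\N$ preserved under differences and $\max\{0,\cdot\}$. Nothing is missing; your only added care (applying the hypothesis at $g^2$ via continuity of squaring) is the right and only point to check.
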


Let us first recall the statement of Theorem \ref{equiv}.

\begin{thm}
Let $G$ be a completely metrisable or locally compact group acting without inversion on  a tree $X$. Then the following four properties are equivalent.
\begin{enumerate}
\item $G$ either fixes an end or has an open subgroup fixing a vertex,
\item the amplitude $\|\cdot\|\colon G\til \N$ is a continuous function on $G$,
\item the set of elliptic isometries is somewhere comeagre in $G$,
\item there is an open subgroup consisting of elliptic isometries.
\end{enumerate}
Moreover, the above properties hold for any action of $G$ if and only if whenever $G$
is written as a non-trivial free product with amalgamation, $G=A*_CB$,  the subgroups $A$, $B$, and $C$ are open in $G$.
\end{thm}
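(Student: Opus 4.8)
The plan is to take the equivalence of properties (1)--(4) as already established, so that niceness of an action may be certified by checking any one of them, and then to prove the two implications of the final biconditional separately. Throughout I would lean on three inputs: Dudley's Theorem \ref{dudley} in its strong form (a homomorphism of $G$ into \emph{any} free group has open kernel), Serre's Theorem \ref{serre2} (so that $G/R\iso\pi_1(G\setminus X)$ is free, $R$ being generated by the elliptic elements), and Theorem \ref{dihedral} together with Serre's and Tits' lemmas.

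For the implication ``nice $\Rightarrow$ (OA)'', I would fix a non-trivial decomposition $G=A\ast_CB$, let $X$ be the associated Bass--Serre tree with base vertices $v_A=A$, $v_B=B$ and base edge $e_0$, and first show $G$ fixes no end of $X$. Since $G\setminus X$ is a single segment, Theorem \ref{serre2} gives $G=R$, so $G$ is generated by elliptic elements; a fixed end would yield a homomorphism $\chi\colon G\til\Z$ recording translation towards that end, which vanishes on every neutral elliptic generator and is therefore trivial, contradicting $\chi(ab)\neq 0$ for the hyperbolic $ab$ with $a\in A\setminus C$, $b\in B\setminus C$. Hence property (1) forces an open subgroup to fix a vertex, so a conjugate of $A$ or of $B$, and thus (conjugation being a homeomorphism) $A$ or $B$ itself, is open. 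Say $A$ is open; then $G/A$ is discrete and every orbit map $g\mapsto gx$, $x\in G/A$, is continuous. Choosing $b\in B\setminus C$, the vertices $v_A$ and $bv_A$ are distinct neighbours of $v_B$, so lie at distance $2$ with unique common neighbour $v_B$; consequently $gv_B$ is the unique common neighbour of the pair $\Phi(g):=(gv_A,\,g\,bv_A)\in(G/A)^2$. As $\Phi$ is continuous into a discrete space, $B=\{g:gv_B=v_B\}=\Phi\inv(S)$, with $S$ the set of pairs whose common neighbour is $v_B$, is open, and then $C=A\cap B$ is open as well; the same works symmetrically if $B$ was the factor first found open.

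For the converse ``(OA) $\Rightarrow$ nice'', I would first apply Dudley's theorem to $G\til G/R\iso\pi_1(G\setminus X)$: the target being free, $R$ is \emph{open}. Restricting to $R$, which is again completely metrisable or locally compact and is generated by its elliptic elements, Theorem \ref{serre2} shows $R\setminus X$ is a tree. A fixed vertex or a fixed end of the original action gives property (1) at once, and an invariant line is dispatched by Theorem \ref{dihedral}; so assume none of these. If $R$ fixes a vertex we are done, as the open subgroup $R$ then fixes a vertex. Otherwise $R\setminus X$ contains an edge, which is automatically a \emph{non-loop} edge since $R\setminus X$ is a tree, and collapsing the other edge-orbits exhibits a non-trivial amalgam $R=A'\ast_{G_e}B'$ amalgamated over the stabiliser $G_e$ of an edge $e$ of $X$. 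When $R=G$ this is a splitting of $G$ itself, (OA) makes $G_e$ open, and an open subgroup fixing $e$ (hence a vertex) gives property (1).

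The main obstacle is the remaining case, where $R\subsetneq G$ and the action is of \emph{general type} with a single vertex-orbit, so $G\setminus X$ is a bouquet of loops and the only splitting visible from the action is an HNN extension rather than an amalgam. Here (OA), being about amalgams, does not apply to $G$ directly; what Dudley's theorem delivers is that the open kernel $R$ has at least two vertex-orbits and hence splits non-trivially as $R=A'\ast_{G_e}B'$ over the edge stabiliser $G_e$. The crux is thus to convert this splitting of the open subgroup $R$ into an \emph{open} edge stabiliser of the ambient action of $G$ --- equivalently, to propagate property (OA) from $G$ down to $R$ and apply it to $R=A'\ast_{G_e}B'$. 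I expect this descent step, precisely where the full strength of Dudley's theorem and complete metrisability must be used, to carry the real weight of the argument; once $G_e$ is shown to be open it is an open subgroup fixing the edge $e$, property (1) follows, and the proof is complete.
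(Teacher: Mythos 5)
Your first half (the equivalent properties for all actions imply (OA)) is correct and runs parallel to the paper's argument: both pass to the Bass--Serre tree of $G=A*_CB$. Where the paper takes the open subgroup $H$ of elliptic isometries furnished by property (4) and treats separately the cases in which $H$ fixes a vertex and in which $H$ fixes an end, you instead rule out a fixed end of $G$ outright (the translation homomorphism $\chi\colon G\til \Z$ vanishes on the elliptic generators $A\cup B$, hence everywhere, yet $\chi(ab)\neq 0$ for $a\in A\setminus C$, $b\in B\setminus C$) and then invoke property (1); and your common-neighbour continuity argument for the openness of $B$ and $C$ replaces the paper's observation that the open subgroup $g\inv Hg\cap bg\inv Hgb\inv$ lies inside $A\cap bAb\inv\leqslant C$. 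That direction is fine.

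The converse, however, has a genuine gap, and it is not the one you flagged. Your claim that when $R$ fixes no vertex, collapsing edge-orbits ``exhibits a \emph{non-trivial} amalgam $R=A'\ast_{G_e}B'$'' is false, and the paper's proof lives precisely in the case you have assumed away. Take $G=\R=\bigcup_n G_n$, with $G_n$ the $\Q$-span of an increasing exhaustion of a Hamel basis (the paper's own example in Section \ref{examples}), acting on the tree with vertex set $G/G_0\sqcup G/G_1\sqcup\ldots$. Here $R=G$ fixes no vertex, no open subgroup fixes a vertex or an edge, and every one-edge splitting is trivial; indeed $\R$, being abelian, admits no non-trivial amalgam decomposition whatsoever, so it satisfies (OA) vacuously. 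Thus your intended endgame --- produce an open edge stabiliser and get property (1) via a fixed vertex --- is unreachable in principle: the correct conclusion in this example is the other disjunct of (1), a fixed end. The missing idea is the paper's orientation argument: when every splitting $R=A_e*_{R_e}A_{\overline e}$ is trivial, each edge of the fundamental subtree $T$ is oriented toward its large side (\eqref{orient1} versus \eqref{orient2}); one checks that every vertex has exactly one outgoing positively oriented edge, and following these edges yields a ray $x_0,x_1,x_2,\ldots$ with $R_{x_0}\leqslant R_{x_1}\leqslant\cdots$ and $R=\bigcup_n R_{x_n}$, so that $R$ is an open subgroup consisting entirely of elliptic isometries, i.e., property (4). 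As for the ``descent'' step you expected to carry the real weight of the proof: the paper disposes of the non-trivial case in one line by applying its amalgam hypothesis directly to the splittings $R=A_e*_{R_e}A_{\overline e}$ --- exactly the move you hesitated over --- to conclude that all of them are trivial once no open subgroup of $G$ fixes an edge. Whatever care that step may deserve when $R$ is a proper subgroup of $G$, it is not where the substance lies; your proposal leaves both it and the main, all-trivial case unproven.
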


\begin{proof}Suppose $G$ acts without inversion on a tree $X$.

(1)$\saa$(2): Assume $G$ satisfies (1). We show that $\|\cdot\|\colon G\til \N$ is continuous.

Suppose first that $\bf e$ is an end fixed by $G$ and define a homomorphism $\pi\colon G\til \Z$ as follows
$$
\pi(g)=\begin{cases}
0; &\text{if $g$ is elliptic,}\\
\|g\|; &\text{if $g$ is hyperbolic and $\bf e$ is an attracting fixed end for $g$,}\\
-\|g\|; &\text{if $g$ is hyperbolic and $\bf e$ is a repulsing fixed end for $g$.}
\end{cases}
$$
Since $\bf e$ is a fixed end for $g$, this is easily seen to be a homomorphism, and so, by  Theorem \ref{dudley}, $\pi$ is continuous. Thus, also $\|\cdot\|\colon G\til \N$ given by $\|g\|=|\pi(g)|$ is continuous. 

Now suppose instead that $K$ is an open subgroup of $G$ fixing a vertex $x\in {\rm Vert}\:X$. Note that if $f,g\in G$ satisfy $g\inv f, g^{-2}f^2\in K$, then $g(x)=f(x)$ and $g^2(x)=f^2(x)$, whence, by Lemma \ref{amplitude}, $\|g\|=\|f\|$. Since $K$ is open it follows that $\|\cdot\|$ is locally constant and thus continuous.

(2)$\saa$(3):  Assume that $\|\cdot\|$ is continuous. Then the set $E=\{g\in G\del \|g\|=0\}$ of elliptic isometries is non-empty open and hence somewhere comeagre in $G$.

(3)$\saa$(4): We split the proof of this implication into several lemmas.
\begin{lemme}\label{finite segment}
Suppose $\go s=(x_0,x_1,\ldots,x_n)$ is a finite line segment of $X$ and $B\subseteq G$ is a somewhere comeagre set of elliptic isometries such that $X_g\cap \go s\neq \tom$ for all $g\in B$. Then there is an open subgroup $H\leqslant G$ that fixes a single vertex $x\in {\rm Vert}\;X$.
\end{lemme}

\begin{proof}
Let $\{x_i,x_{i+1},\ldots,x_j\}\subseteq \{x_0,x_1,\ldots,x_{n}\}$ be a segment, minimal with respect to the property that for a somewhere comeagre subset of elliptic isometries $D\subseteq G$ we have
$$
X_g\cap \{x_i,x_{i+1},\ldots,x_j\}\neq \tom
$$
for all $g\in D$. Fix also the corresponding set $D$ and assume toward a contradiction that $i<j$.

Now, choose a non-empty open subset $U\subseteq G$ in which $D$ is comeagre  and fix a non-empty open subset $V\subseteq U$ such that $V^{2j+1}V^{-2j}\subseteq U$. By the minimality of $ \{x_i,x_{i+1},\ldots,x_j\}$, we can find $g,h\in D\cap V$ such that
$$
x_i\in X_g, \quad x_{i+1}\notin X_g, \quad x_{j-1}\notin X_h, \quad x_j\in X_h.
$$
Since then $X_g\cap X_h=\tom$,  $hg$ is a hyperbolic isometry of amplitude
$$
\|hg\|=2\cdot{\rm dist}(X_h,X_g)=2\cdot d(x_j,x_i)\geqslant 2
$$
along a line $\ell_{hg}$ containing the geodesic path from $X_g$ to $X_h$, i.e., $ \{x_i,x_{i+1},\ldots,x_j\}$. By inspection one sees that $hg$ translates the line $\ell_{hg}$ in the direction from $x_i$ toward $x_j$ (assuming that the action of $G$ on $X$ is a left-action). It follows that if $Y$ is a subtree of $X$ containing some $x_p$, $i<p\leqslant j$ but not containing $x_i$, then
$$
(hg)^j(Y)\cap  \{x_i,x_{i+1},\ldots,x_j\}=\tom.
$$
Note now that as $V^{2j+1}V^{-2j}\subseteq U$, $g,h\in V$ and  $D$ is comeagre in $U$, the set
$$
E=D\cap (hg)^{-j}D(hg)^{j}
$$
is comeagre in $V$. So, by the minimality of $ \{x_i,x_{i+1},\ldots,x_j\}$, we can find some $k\in E$ such that $x_i\notin X_k\cap \{x_i,x_{i+1},\ldots,x_j\}\neq \tom$. We then have that
$$
X_{(hg)^{j}k(hg)^{-j}}=(hg)^{j}(X_k)
$$
is disjoint from $\{x_i,\ldots,x_j\}$, contradicting that $(hg)^{j}k(hg)^{-j}\in D$.

Thus, our assumption that $i<j$ is incorrect and hence, for all $g\in D$, $x_i=x_j\in X_g$. Letting $H$ be the subgroup of $G$ generated by $D$, $H$ fixes $x_i$ and by Pettis' Theorem (see Theorem (9.9) in \cite{kechris}) is an open subgroup of $G$.
\end{proof}

\begin{lemme}\label{half line}
Suppose $\ell^+=(x_0,x_1,x_2,\ldots)$ is an infinite half-line in $X$ and $B\subseteq G$ is a somewhere comeagre set of elliptic isometries such that $X_g\cap \ell^+\neq \tom$ for all $g\in B$. Then there is an open subgroup $H\leqslant G$ either fixing a vertex or having a neutral fixed end.
\end{lemme}

\begin{proof}
Fix a non-empty open set $U$ in which $B$ is comeagre and let $V\subseteq U$ be a non-empty open subset such that $VVV\inv \subseteq U$.

If for all $f\in B\cap V$, $X_f$ contains a tail of $\ell^+$, then if $H$ is the subgroup of $G$ generated by $B\cap V$, $H$ is open by Pettis' Theorem and $\ell^+$ is a neutral fixed end for $H$.

So suppose  instead that there is some  $f\in B\cap V$ such that no tail of $\ell^+$ is contained in $X_f$ and let $n$ be maximal such that $x_n\in X_f$.

\begin{claim}
For a somewhere comeagre subset $C\subseteq B$ and all $g\in C$,
$$
X_g\cap \{x_{0}, x_{1},\ldots, x_{2n}\}\neq \tom.
$$
\end{claim}

\begin{proof}
Note that by assumption on $n$,
$$
f(\{x_{n+1}, x_{n+2},x_{n+3},\ldots\})\cap  \{x_{n+1}, x_{n+2},x_{n+3},\ldots\}=\tom.
$$
Now, let $Y\subseteq X$ be the subtree of $X$ consisting of all $y$ whose geodesic path to $x_n$ contains the line segment $(x_{2n+1},x_{2n},\ldots,x_n)$. Since for any $y\in Y$,
$$
d(f(y),x_n)=d(f(y),f(x_n))=d(y,x_n)\geqslant d(x_{2n+1},x_n)>n,
$$
we see that $f(Y)\cap \{x_0,\ldots,x_n\}=\tom$. Moreover, for any $y\in Y$, the geodesic path from $f(y)$ to $f(x_n)=x_n$ contains the line segment $f(\{x_{2n+1},x_{2n},\ldots,x_n\})$, so as
$$
f(\{x_{n+1}, x_{n+2},x_{n+3},\ldots\})\cap  \{x_{n+1}, x_{n+2},x_{n+3},\ldots\}=\tom,
$$
we have $f(Y)\cap \ell^+=\tom$.

Now, since $f\in V$, we have $fVf\inv \subseteq U$, so $C=B\cap f\inv Bf$   is comeagre in $V$. It follows that for every $g\in C$, $X_g\cap \{x_{0}, x_{1},\ldots, x_{2n}\}\neq \tom$.
For assume this fails for some $g\in C$. Then, since $X_g\cap \ell^+\neq\tom$, 
$$
X_g\cap \{x_{2n+1},x_{2n+2},x_{2n+3},\ldots\}\neq \tom
$$
and hence $X_g\subseteq Y$. Now $fgf\inv \in B$, but
$$
X_{fgf\inv}\cap \ell^+=f(X_g)\cap \ell^+\subseteq f(Y)\cap \ell^+=\tom,
$$
which contradicts the assumption that $X_h\cap \ell^+\neq \tom$ for all $h\in B$.
\end{proof}

Now applying Lemma \ref{finite segment} to the somewhere comeagre set $C$ and the finite line segment $\go s=(x_0,\ldots,x_{2n})$, we obtain an open subgroup $H$ of $G$ fixing a vertex of $X$.
\end{proof}

\begin{lemme}\label{line}
Suppose $\ell=(\ldots,x_{-2},x_{-1},x_0,x_1,x_2,\ldots)$ is a biinfinite line in $X$ and $B\subseteq G$ is a somewhere comeagre set of elliptic isometries such that $X_g\cap \ell\neq \tom$ for all $g\in B$. Then there is an open subgroup $H\leqslant G$ either fixing a vertex or having a neutral fixed end.
\end{lemme}

\begin{proof}
Let $U\subseteq G$ be a non-empty open set in which $B$ is comeagre and let $V\subseteq U$ be a non-empty open subset such that $VVV\inv\subseteq U$.

Assume first that for some $f\in B\cap V$, $f(\ell)\neq \ell$ and pick some $x_n\in \ell$ such that $f(x_n)\notin \ell$. Then either
$$
f(\{x_n,x_{n+1},x_{n+2},\ldots\})\cap \ell=\tom
$$
or
$$
f(\{\ldots,x_{n-2},x_{n-1},x_n\})\cap \ell=\tom.
$$
Without loss of generality, we can suppose the first option holds. Notice that, since $B$ is comeagre in $U$ and $fVf\inv\subseteq U$, $B\cap f\inv Bf$ is comeagre in $V$. We claim that if $g\in B\cap f\inv Bf$, then
$$
X_g\cap \{\ldots,x_{n-3},x_{n-2},x_{n-1}\}\neq \tom.
$$
If not, pick a $g$ for which it fails. Since $g\in B$, $X_g\cap \ell\neq \tom$ and hence
$$
X_g\cap \{x_{n},x_{n+1},x_{n+2},\ldots\}\neq \tom.
$$
So $X_g$ is contained in the subtree of $X$ consisting of all $y\in X$ whose geodesic path to $\{\ldots,x_{n-3},x_{n-2},x_{n-1}\}$ passes through $x_n$. By choice of $f$ it follows that $f(X_g)\cap \ell=\tom$. But $f(X_g)=X_{fgf\inv}$ and $fgf\inv\in B$, contradicting the assumption on $B$.

It thus follows that there is a half-line $\ell^+= (x_{n},x_{n+1},x_{n+2},\ldots)$ in $X$ such that for a somewhere comeagre set $C=B\cap f\inv Bf$ of elliptic isometries we have $X_g\cap \ell^+\neq \tom$ for all $g\in C$. By Lemma \ref{half line}, the conclusion of the lemma follows.

Now assume instead that for all $f\in B\cap V$, $f(\ell)=\ell$. Then, if $K\leqslant G$ is the subgroup of $G$ generated by $B\cap V$, $K$ will be open by Pettis' Theorem and, moreover, $\ell$ is invariant under $K$. Now, if $G$ is completely metrisable, so is $K$, and hence, by Theorem \ref{dihedral}, there is an open subgroup $H\leqslant K$ fixing a vertex of $\ell\subseteq X$. If on the other hand, $G$ and thus also $K$ are locally compact, as we shall see in Theorem \ref{locally compact}, $K$ has an open subgroup $H$ with property (FA') that therefore also fixes a vertex of $\ell\subseteq X$.
\end{proof}

Now suppose finally that the set $E\subseteq G$ of elliptic isometries is somewhere comeagre. We will show that $G$ has an open subgroup $H$ that either fixes a vertex or has a neutral fixed end. So, let $U\subseteq G$ be a non-empty open set in which $E$ is comeagre and let $V\subseteq U$ be a non-empty open subset such that $VVV\inv\subseteq U$.

Suppose first that the trees $\{X_f\}_{f\in E\cap V}$ intersect pairwise. Then, if there is some $x\in \bigcap_{f\in E\cap V}X_f$,  the somewhere comeagre set $E\cap V$ and hence also the open subgroup $H\leqslant G$ generated by $E\cap V$ fixes $x$. Otherwise, if $\bigcap_{f\in E\cap V}X_f=\tom$, there is an infinite half-line $\ell^+$ such that any $X_g$, $g\in E\cap V$, contains a tail of $\ell^+$, and so the open subgroup $H\leqslant G$ generated by $E\cap V$ has a neutral fixed end.

So suppose instead that for some $f,g\in E\cap V$,  $X_{f}\cap X_{g}= \tom$. Then $fg\inv$ is a hyperbolic isometry acting by translation on some line $\ell_{fg\inv}\subseteq X$. Also, for all $h\in E\cap Egf\inv$, $X_h\cap \ell_{fg\inv}\neq \tom$, for otherwise, $hfg\inv$ will be hyperbolic, contradicting that $hfg\inv \in E$. Applying Lemma \ref{line} to the somewhere comeagre set $E\cap Egf\inv$, we again find an open subgroup $H$ either fixing a vertex or having a neutral fixed end.

(4)$\saa$(1): This follows from Proposition 5 in \cite{alperin2}, but for the convenience of the reader, we shall reproduce the argument here.
Suppose that $H\leqslant G$ is an open subgroup consisting of elliptic isometries and that $G$ has no open subgroup fixing a vertex of $X$. Then, in particular, $H$ fixes no vertex and hence must have a neutral fixed end $\bf e$. We claim that $\bf e$ is fixed by $G$. To see this, suppose toward a contradiction that for some $g\in G$, we have $g\cdot \bf e\neq \bf e$. Then $g\cdot \bf e$ is a neutral fixed end for $gHg\inv$, whereby both $\bf e$ and $g\cdot \bf e$ are neutral fixed ends for the open subgroup $N=H\cap gHg\inv\leqslant G$. But, as $g\cdot\bf e\neq e$, there is a unique biinfinite line $\ell$ sharing  tails with both $\bf e$ and with $g\cdot \bf e$. It follows that any $h\in N$ fixes two opposite tails of $\ell$, whereby $h$ fixes every vertex of $\ell$. This contradicts that no open subgroup of $G$ fixes a vertex.

\

For the moreover part, suppose that whenever $G$ acts without inversion on a tree properties (1), (2), (3), and (4) hold. Assume $G=A*_CB$ is a decomposition of $G$ as a non-trivial free product with amalgamation and let $X$ be the Bass-Serre tree (see Theorem 7, p. 32 \cite{serre}) corresponding to this decomposition having vertices 
$$
{\rm Vert}\; X=G/A\sqcup G/B
$$ 
and with edges between $gA$ and $gB$ for any $g\in G$.   By assumption on $G$, there is an open subgroup $H\leqslant G$ consisting of elliptic isometries. Also, since $C$ is a subgroup of  $A$ and $B$,  it suffices to show that  $C$ is open in $G$. 

Suppose first that $H$ fixes a vertex of $X$. Then, by construction of $X$, $H$ is contained in a conjugate of either $A$ or of $B$, say $H\leqslant gAg\inv$. Pick $b\in B\setminus C$. Then 
$$
g\inv Hg\cap bg\inv Hgb\inv\leqslant A\cap bAb\inv\leqslant C
$$
and so $C$ is open too.

Suppose now instead that $H\leqslant G$ fixes an end $\bf e$. We can write ${\bf e}=[\ell^+]$, where
$$
\ell^+=(1A, a_0B, a_0b_0A,a_0b_0a_1B,a_0b_0a_1b_1A,\ldots)
$$
for $a_i\in A$ and $b_i\in B$. Since $B$ is a proper subgroup of $G$, it has index at least $2$ and hence there is some $a\in A$ such that $aB\neq a_0B$. It follows that
$$
aa_0\inv\cdot \ell^+=(1A, aB, ab_0A,ab_0a_1B,ab_0a_1b_1A,\ldots)
$$
intersects $\ell^+$ only in the vertex $1A\in {\rm Vert}\;X$. So $N=H\cap aa_0\inv Ha_0a\inv$ is an open subgroup of elliptic isometries fixing the biinfinite  line 
$$
\ell=\ell^+\cup aa_0\inv \cdot \ell^+.
$$
Thus $N$ must be contained in a conjugate of $C$, whereby $C$ is open too.

For the converse implication,  suppose that whenever $G=A*_CB$ is a decomposition as a non-trivial free product with amalgamation, the subgroups $A$, $B$, and $C$ are open in $G$.  Assume also that $G$ acts without inversion on a tree $X$.  By Theorem \ref{serre2}, if $R$ denotes the subgroup of $G$ generated by elliptic isometries, then $G/R\iso \pi_1(G\setminus X)$. So, as $\pi_1(G\setminus X)$  is a free group, by Theorem \ref{dudley} the quotient map from $G$ onto $G/R$ has open kernel, whence $R$ is an open subgroup of $G$. Now, applying Theorem \ref{serre2}, to the action of $R$ on $X$, we see that $R\setminus X$ is a tree.  By Proposition 17, p. 32 of \cite{serre}, we can find a fundamental domain $T\subseteq X$ for the action of $R$ on $X$, whence $T$ is a subtree of $X$. We let for every $x\in {\rm Vert}\:T$, $R_x={\rm Stab}_R(x)$, and for every $e\in {\rm Edge}\:T$, 
$$
R_e=R_{\overline e}={\rm Stab}_R(e)=R_{o(e)}\cap R_{t(e)}.
$$ 
So the inclusion homomorphisms $\alpha_e\colon R_e\inj R_{o(e)}$, $\alpha_{\overline e} \colon R_{\overline e}\inj R_{t(e)}$ are monomorphisms. This defines a tree of groups $(R,T)$, and by Theorem 10, p. 39 of \cite{serre}, we have
$$
R\iso \lim_{\longrightarrow}(R,T).
$$
In other words, $R$ is the free product of the groups $\{R_x\}_{x\in {\rm Vert}\;T}$ subject to the additional relations
$$
\alpha_{\overline e}(g)=\alpha_{e}(g)
$$
for $e\in {\rm Edge}\;T$ and $g\in R_e=R_{\overline e}$.

Now, for any edge $e\in {\rm Edge}\;T$, let $T_e$ be the subtree of $T$ spanned by the set of vertices $x$ whose geodesic to $t(e)$ passes through $e$. So, for any edge $e\in {\rm Edge}\;T$, the set of vertices of $T$ decomposes as
$$
{\rm Vert}\; T={\rm Vert}\;T_e\sqcup {\rm Vert}\:T_{\overline e},
$$
whence, if $A_e=\langle R_x\del x\in {\rm Vert}\;T_e\rangle$, then $R=A_e*_{R_e}A_{\overline e}$.

Now suppose that $G$ has no open subgroup fixing a vertex and hence no open subgroup fixing an edge. Then, by the basic assumption on $G$, we see that for any edge $e\in {\rm Edge}\;T$, the decomposition $R=A_e*_{R_e}A_{\overline e}$ is trivial, whence either 
\begin{equation}\label{orient1}
A_e=R_{o(t)}=R_e\leqslant R_{t(e)}\leqslant A_{\overline e}
\end{equation}
or
\begin{equation}\label{orient2}
A_{\overline e}=R_{t(e)}=R_e\leqslant R_{o(e)}\leqslant A_e.
\end{equation}
We note that if both $A_e=R_e$ and $R_e=A_{\overline e}$, then $R=R_e$, contradicting that no open subgroup of $G$ fixes an edge. Thus, for any edge $e\in {\rm Edge}\;T$, exactly one of \eqref{orient1} and \eqref{orient2} holds. This gives us a natural orientation of the edges of the tree $T$, namely by letting 
$$
{\rm Edge}^+T=\{e\in {\rm Edge}\;T\del A_e=R_{o(t)}=R_e\leqslant R_{t(e)}\leqslant A_{\overline e}\}.
$$

Assume towards a contradiction that there is a vertex $x\in {\rm Vert}\;T$ such that any edge whose terminal vertex is $x$ belongs to ${\rm Edge}^+T$. Then, as $R$ is generated by
$$
R_x\cup \bigcup\{A_e\del e\in {\rm Edge}\:T\;\&\; t(e)=x\}=R_x\cup \bigcup\{A_e\del e\in {\rm Edge}^+T\;\&\; t(e)=x\}=R_x,
$$
we see that $R=R_x$, contradicting that $G$ has no open subgroup fixing a vertex. 

Thus, for every vertex $x$, there is a positively oriented edge whose origin is $x$. We claim that moreover there is exactly one such edge for each vertex $x$. To see this, suppose $e_1,e_2\in {\rm Edge}^+T$  are distinct edges with $o(e_1)=o(e_2)=x$. Then, as $e_1$ is positively oriented,
$$
A_{\overline e_2}\leqslant A_{e_1} \leqslant R_{t(e_1)},
$$
whence  $A_{\overline e_2}=R_{t(e_2)}=R_{e_2}\leqslant R_x$, contradicting that $e_2$ is positively oriented.

So every vertex has exactly one outgoing, positively oriented vertex. This uniquely defines an end of the tree by following the positively oriented path originating from any vertex. I.e., let $x_0$ be an arbitrary vertex of $T$ and define inductively $x_n$ by letting $x_{n+1}$ be the terminal vertex of the unique positively oriented edge originating at $x_n$. Then, we see that if $(y_n)$ was a similarly defined path originating at another vertex $y_0$, the two paths $\ell^+=(x_n)$ and $(y_n)$ would have a common tail. Moreover, as
$$
R_{x_0}\leqslant R_{x_1}\leqslant R_{x_2}\leqslant\ldots
$$
and 
$$
R_{y_0}\leqslant R_{y_1}\leqslant R_{y_2}\leqslant\ldots,
$$
we see that $R_{y_0}\leqslant R_{x_n}$ for all but finitely many $n$. Since $R$ is generated by the vertex stabilisers, it follows that $R=\bigcup_{n\in \N}R_{x_n}$, whence ${\bf e}=[\ell^+]$ is a neutral fixed end for $R$. Thus, $R$ is an open subgroup of $G$ consisting entirely of elliptic isometries. In any circumstance, Property (4) holds.
\end{proof}


\section{Further results on completely metrisable and locally compact groups}\label{non-discrete}
There are a number of results in the literature treating various aspects of completely metrisable and locally compact groups acting on trees, though these are somewhat scattered and the authors seem to have worked mostly independently of each other.

Using Dudley's Theorem, S. A. Morris and P. Nickolas \cite{morris} showed that if $G$ is locally compact and $\pi\colon G\til A*B$ is a homomorphism into an arbitrary free product of groups, then either $\pi$ is continuous with respect to the discrete topology on $A*B$ or the image $\pi(G)$ lies within a conjugate of either $A$ or $B$.

In a completely unrelated work, H. Bass \cite{bass} was developing the structure theory for groups acting on trees and in this connection showed that profinite groups have property (FA'). On the other hand, S. Koppelberg and J. Tits \cite{tits} proved that if $F$ is a finite perfect group, the infinite product $F^\N$ is not the union of a countable chain of proper subgroups. Together, these results imply that $F^\N$ actually has property (FA), i.e., whenever $F^\N$ acts without inversion on a tree it fixes a vertex.

R. Alperin \cite{alperin1, alperin2} continued the study of properties (FA') and (FA) in the context of topological groups and showed that both compact groups and connected locally compact groups have property (FA').  

Recently there has been renewed interest in such questions from model theory. In particular, D. Macpherson and S. Thomas \cite{mactho} showed that is $G$ is a completely metrisable  group with a comeagre conjugacy class, then $G$ has property (FA') (a simple proof of this is given in \cite{OB}). A number of other authors, e.g., \cite{ivanov1, kossak,khelif,shelah} give various results related to both Dudley's Theorem and the result of Macpherson and Thomas. 

On a somewhat different note, G. M. Bergman \cite{bergman} showed that whenever the infinite symmetric group $S_\infty$ acts by isometries on a metric space without any assumption of continuity, all orbits are bounded. It easily follows from this that $S_\infty$ has property (FA) (which was proved earlier by J. Saxl, S. Shelah and S. Thomas in \cite{saxl}) and has even stronger fixed point properties. Bergman's result has now been verified for a number of other non-locally compact groups by various authors.

Let us first settle the situation for locally compact groups by deducing from Alperin's results.
\begin{thm}\label{locally compact}
Any locally compact, Hausdorff topological group has an open subgroup with property (FA').
\end{thm}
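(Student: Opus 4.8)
The plan is to reduce to the two situations that Alperin's results already cover---connected groups and compact groups---using the structure theory of locally compact groups. First I would let $G_0$ be the connected component of the identity, a closed normal subgroup with $G/G_0$ totally disconnected, and invoke van Dantzig's theorem to find a compact open subgroup of $G/G_0$; pulling it back along the quotient map $q\colon G\til G/G_0$ produces an open subgroup $H\leqslant G$ with $G_0\leqslant H$ and $H/G_0$ compact. The whole problem then reduces to showing that such an extension of a connected locally compact group by a compact group has property (FA').

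To prove that, I would fix an action of $H$ without inversion on a tree $X$ and aim to show every element is elliptic. Since $G_0$ is connected and locally compact, Alperin's theorem gives it property (FA'), so all its elements are elliptic; by Serre's Lemma and the dichotomy recalled in Section \ref{trees and actions}, $G_0$ then either fixes a vertex or has a neutral fixed end. These two cases will be handled separately.

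In the first case I would pass to the subtree $Y$ of vertices fixed by $G_0$; normality of $G_0$ in $H$ makes $Y$ invariant under $H$, and since $G_0$ acts trivially on $Y$ the action of $H$ on $Y$ factors through the compact group $H/G_0$. Compact groups have (FA') by Alperin, so every element of $H/G_0$ fixes a vertex of $Y\subseteq X$, giving the conclusion. In the second case I would first argue that the neutral fixed end $\bf e$ of $G_0$ is unique---otherwise the elliptic elements of $G_0$ would fix a whole connecting line and hence a vertex---so that normality forces $H$ to fix $\bf e$ as well. Fixing an end, $H$ carries the translation homomorphism $\pi\colon H\til \Z$ of the implication (1)$\saa$(2) of Theorem \ref{equiv}, which vanishes precisely on the elliptic elements and is trivial on $G_0$; it therefore factors through the compact quotient $H/G_0$, where it must vanish, either because (FA') forbids a homomorphism onto $\Z$ or directly by Dudley's Theorem \ref{dudley}.

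The hard part will be the second case: a connected locally compact group such as $\R$ is divisible, hence acts with every element elliptic, yet may fix only an end and not a vertex, so property (FA) genuinely fails for $G_0$ and one cannot simply extract a global fixed vertex. The crux is to show that the end fixed by $G_0$ is unique and thus $H$-invariant, after which killing the translation homomorphism on the compact quotient $H/G_0$ finishes the argument.
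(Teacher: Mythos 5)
Your proof is correct, and its skeleton coincides with the paper's: pass to the identity component $G_0$, which has (FA$'$) by Alperin \cite{alperin2}, use van Dantzig's theorem on the totally disconnected quotient $G/G_0$ to find a compact open subgroup, and pull it back to an open subgroup $H\leqslant G$ with $G_0\leqslant H$ and $H/G_0$ compact. Where you differ is in the final step: the paper simply cites Corollary 2 of \cite{alperin2} for the fact that an extension of a group with (FA$'$) by a group with (FA$'$) again has (FA$'$), whereas you prove this closure property from scratch. Your two cases do work: if $G_0$ fixes a vertex, the fixed subtree is $H$-invariant by normality, the action on it factors through the compact group $H/G_0$, and Alperin's theorem for compact groups \cite{alperin1} applies; if $G_0$ has only a neutral fixed end, your uniqueness argument (two distinct neutral fixed ends would force every element of $G_0$ to fix the connecting line pointwise, hence $G_0$ would fix a vertex) is precisely the argument the paper itself uses in the implication (4)$\saa$(1) of Theorem \ref{equiv}; normality then makes the end $H$-fixed, and the translation homomorphism $\pi\colon H\til\Z$ vanishes on $G_0$, descends to the compact Hausdorff group $H/G_0$, and is trivial there either by Dudley's Theorem \ref{dudley} (open kernel plus compactness gives finite, hence trivial, image in $\Z$) or because (FA$'$) rules out nontrivial homomorphisms to $\Z$. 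Comparing the two: the paper's proof is a three-line citation chain, while yours is self-contained within the paper's own toolkit (Serre's Lemma, the Tits dichotomy of Section \ref{trees and actions}, the homomorphism of (1)$\saa$(2), and Dudley); moreover, your argument makes visible that the extension step is essentially algebraic---it establishes that (FA$'$) is closed under extensions of abstract groups, since a quotient with (FA$'$) already admits no nontrivial homomorphism to $\Z$---with topology entering only through the inputs $G_0$, $H/G_0$ having (FA$'$).
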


\begin{proof}
Let $G_0$ be the connected component of the identity in $G$. Since $G_0$ is connected and locally compact, Hausdorff, $G_0$ has property (FA') by the results of Alperin \cite{alperin2}. Moreover, $G/G_0$ is a locally compact, totally disconnected group, so, by  a theorem of van Dantzig (\cite{hofmann}, Theorem 1.34), $G/G_0$ has a neighbourhood basis at the identity consisting of compact open subgroups. So let $K\leqslant G/G_0$ be compact open and let $H\leqslant G$ be the preimage of $K$ under the projection map from $G$ to $G/G_0$. Then $H$ is an open subgroup of $G$ containing $G_0$. By the main result of Alperin \cite{alperin1}, $K=H/G_0$ has property (FA'). Since both $H/G_0$ and $G_0$ have property (FA'), it follows from Corollary 2 of \cite{alperin2} that also $H$ has (FA').
\end{proof}
So, in particular, any action without inversion of a locally compact group on a tree satisfies the equivalent conditions of Theorem \ref{equiv}.  For completely metrisable groups, we have not been able to decide whether this is the case, though we can prove it under fairly mild additional assumptions on the group.

In the following, $G$ will be a completely metrisable group acting without inversion on a tree $X$.
We shall begin by an easy observation (see H. Bass \cite{bass}).
\begin{lemme}
Suppose $g\in G$ is hyperbolic and $H\leqslant G$ is a subgroup containing $g$ such that $H\cdot \ell_g=\ell_g$. Then $N_G(H)\cdot \ell_g=\ell_g$.
\end{lemme}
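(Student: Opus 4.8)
The plan is to reduce everything to the uniqueness of the axis of a hyperbolic isometry, since both the normaliser condition and the hypothesis $H\cdot\ell_g=\ell_g$ interact very rigidly with that uniqueness. First I would record the elementary fact that conjugation carries axes to their images: if $n\in G$ is any isometry, then $ngn\inv$ is again hyperbolic with $\|ngn\inv\|=\|g\|$, and its axis is $\ell_{ngn\inv}=n(\ell_g)$. This is immediate from the fact that $n$ is an isometry, since for a vertex $z=n(w)$ one has $d(z,ngn\inv(z))=d(n(w),ng(w))=d(w,g(w))$; hence $z$ minimises the displacement of $ngn\inv$ exactly when $w$ minimises that of $g$, i.e. exactly when $w\in\ell_g$, so that $z\in n(\ell_g)$.

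The key step, and the one I expect to be the main obstacle, is to show that a hyperbolic isometry $h$ preserves a \emph{unique} line, namely its axis $\ell_h$. Suppose $\ell$ is any line with $h(\ell)=\ell$. Then $h$ restricts to an automorphism of the biinfinite line $\ell$, which is either a translation or a reflection; a reflection either inverts an edge, which is excluded since $G$ acts without inversion, or fixes a vertex, which would make $h$ elliptic, contrary to hypothesis. Hence $h\begr\ell$ is a non-trivial translation, so $h$ fixes both ends of $\ell$. On the other hand, by the trichotomy for a fixed end of an isometry recalled in Section \ref{trees and actions}, every end fixed by the hyperbolic element $h$ is represented by a half-line a tail of which is contained in the axis $\ell_h$; thus $h$ fixes exactly the two ends determined by $\ell_h$. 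Consequently the two distinct ends of $\ell$ coincide with the two ends of $\ell_h$, and since in a tree a line is determined by its pair of ends, we conclude $\ell=\ell_h$.

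With these two facts in hand the conclusion is formal. Let $n\in N_G(H)$. Since $g\in H$ and $n$ normalises $H$, we have $ngn\inv\in H$, and so, as $H\cdot\ell_g=\ell_g$, the element $ngn\inv$ preserves $\ell_g$. But $ngn\inv$ is hyperbolic with axis $n(\ell_g)$ by the first step, so by the uniqueness established in the second step we must have $n(\ell_g)=\ell_g$. As $n\in N_G(H)$ was arbitrary, this yields $N_G(H)\cdot\ell_g=\ell_g$, as required. It is worth remarking that this argument is purely geometric and uses neither complete metrisability nor any topology on $G$.
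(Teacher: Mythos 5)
Your proof is correct and takes essentially the same route as the paper's: conjugate $g$ into $H$ by an element $n$ of the normaliser, observe that $ngn^{-1}\in H$ is hyperbolic and preserves $\ell_g$, and conclude by uniqueness of the invariant line of a hyperbolic isometry that $n(\ell_g)=\ell_{ngn^{-1}}=\ell_g$. The paper's three-line proof leaves the two supporting facts (that conjugation carries axes to their images, and that the axis is the unique invariant line) implicit, whereas you spell them out; your detour through fixed ends in the uniqueness step is slightly longer than necessary, since once $h\!\upharpoonright\!\ell$ is known to be a non-trivial translation the uniqueness of the axis quoted in Section 2.2 applies directly, but it is valid.
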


\begin{proof}
Suppose $f\in N_G(H)$, i.e., that $fHf\inv=H$. Then $fgf\inv\in H$ is hyperbolic, and so the unique $fgf\inv$-invariant line must equal $\ell_g$.
In other words, $f\cdot \ell_g=\ell_{fgf\inv}=\ell_g$.
\end{proof}

\begin{lemme}\label{bass}
Every hyperbolic isometry $g\in G$ generates an infinite discrete subgroup of $G$.
\end{lemme}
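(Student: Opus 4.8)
The plan is to dispatch the two assertions---that $\langle g\rangle$ is infinite and that it is discrete in $G$---separately, the first being immediate and the second being the real content. For infiniteness, recall that a hyperbolic $g$ translates its axis $\ell_g$ by $\|g\|\geqslant 1$, so for every $n\neq 0$ the power $g^n$ again translates $\ell_g$, this time by $\|g^n\|=|n|\cdot\|g\|\geqslant 1$. In particular $g^n$ is hyperbolic and hence $g^n\neq 1$ for all $n\neq 0$, so $\langle g\rangle\iso\Z$ is infinite cyclic.

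For discreteness I would pass to the closed (hence completely metrisable) subgroup $H=\ov{\langle g\rangle}\leqslant G$ and first show that $H$ preserves the line $\ell_g$. Since $G$ is Hausdorff, the centraliser $C_G(g)=\{f\in G\del fgf\inv=g\}$ is closed, and it evidently contains $\langle g\rangle$; therefore $H\leqslant C_G(g)$. As any element commuting with $g$ normalises $\langle g\rangle$, we get $H\leqslant C_G(g)\leqslant N_G(\langle g\rangle)$. Applying the preceding lemma to the subgroup $\langle g\rangle$ (which contains $g$ and satisfies $\langle g\rangle\cdot\ell_g=\ell_g$) yields $N_G(\langle g\rangle)\cdot\ell_g=\ell_g$, and hence $H\cdot\ell_g=\ell_g$. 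Thus $H$ acts, without inversion, on the biinfinite line $\ell_g$, and Theorem~\ref{dihedral} supplies an open subgroup $K\leqslant H$ fixing some vertex $v\in\ell_g$.

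It then remains to exploit the incompatibility between fixing a vertex and being a nontrivial power of $g$. Since $K$ is open in $H$ and $\langle g\rangle\leqslant H$, the set $K\cap\langle g\rangle$ is an open neighbourhood of $1$ in the subspace $\langle g\rangle$. But any $g^n\in K$ fixes $v$, so $g^n$ is elliptic, forcing $\|g^n\|=|n|\cdot\|g\|=0$ and hence $n=0$; consequently $K\cap\langle g\rangle=\{1\}$. Therefore $\{1\}$ is open in $\langle g\rangle$, i.e.\ $\langle g\rangle$ is discrete in $G$. The one delicate point, and the step I expect to require the most care, is the passage to the closure in the second paragraph: one must verify that $\ov{\langle g\rangle}$ still stabilises the axis (this is exactly what the centraliser/normaliser argument buys us) so that the completely metrisable group $H$, rather than the possibly non-closed group $\langle g\rangle$, is the object fed into Theorem~\ref{dihedral}.
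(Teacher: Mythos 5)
Your proof is correct and follows essentially the same route as the paper: pass to the closure $\ov{\langle g\rangle}$, use the preceding lemma to see that it stabilises $\ell_g$, and apply Theorem~\ref{dihedral} to obtain an open subgroup $K$ meeting $\langle g\rangle$ trivially. Your centraliser argument ($\ov{\langle g\rangle}\leqslant C_G(g)\leqslant N_G(\langle g\rangle)$, using that $C_G(g)$ is closed in a Hausdorff group) is precisely the justification the paper leaves implicit for the inclusion $\ov{\langle g\rangle}\leqslant N_G(\langle g\rangle)$, and your explicit check of infiniteness via $\|g^n\|=|n|\cdot\|g\|$ fills in a detail the paper also omits.
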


\begin{proof}Note that, as $G$ is Hausdorff,  $\overline{\langle g\rangle}$ is contained in $N_G(\langle g\rangle)$ and that $\langle g\rangle \cdot \ell_g=\ell_g$. Therefore, by the preceding lemma, we also have $\overline{\langle g\rangle}\cdot \ell_g=\ell_g$. This therefore defines an action by $\overline{\langle g\rangle}$ without inversion on the line $\ell_g$. By Theorem \ref{dihedral}, there is an open subgroup $K\leqslant \overline{\langle g\rangle}$ fixing a vertex, whence $K\cap \langle g\rangle =\{1\}$, showing that $1$ is isolated in $\langle g\rangle$. It follows that $g$ generates an infinite discrete subgroup of $G$.
\end{proof}

We are now ready for the proof of Theorem \ref{somewhere comeagre}.
\begin{thm}
Let $G$ be a completely metrisable group such that the set 
$$
\D=\{g\in G\del \langle g\rangle \textrm{ is either finite or non-discrete }\}
$$
is somewhere dense in $G$. Then $G$ has property (OA).
\end{thm}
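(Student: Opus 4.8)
The plan is to reduce the statement to Theorem \ref{equiv}. By the ``moreover'' part of that theorem, $G$ has property (OA) if and only if \emph{every} action of $G$ without inversion on a tree satisfies the four equivalent conditions listed there; so I fix an arbitrary such action on a tree $X$ and try to verify one of them, most naturally condition (3), that the set $E$ of elliptic isometries is somewhere comeagre. Once (3) holds for every action, Theorem \ref{equiv} promotes it to condition (4), and the computation in its ``moreover'' part then yields openness of $A$, $B$ and $C$ in any non-trivial decomposition $G=A*_CB$.

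The first, and easy, step is to locate a large supply of elliptic elements. By Lemma \ref{bass} every hyperbolic isometry generates an infinite discrete cyclic subgroup of $G$; contrapositively, any $g$ whose cyclic group $\langle g\rangle$ is finite or non-discrete, hence not infinite \emph{and} discrete, must be elliptic. Thus $\D\subseteq E$, and since $\D$ is somewhere dense the set $E$ is dense in the non-empty open set $U=\mathrm{int}\,\overline{\D}$. Two auxiliary facts will be used: $\D$ (and hence $E$) is conjugation invariant; and for an infinite-order $g\in\D$ every power $g^m$ again lies in $\D$, since a finite-index subgroup of a non-discrete group is again non-discrete, so the non-discrete elements of $\D$ in fact accumulate at the identity.

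The hard part will be to promote ``$E$ is somewhere dense'' to ``$E$ is somewhere comeagre'': this is the main obstacle, and it is exactly here that the absence of any continuity hypothesis on $G\curvearrowright X$ bites. Since we make no measurability assumption, neither the amplitude $\|\cdot\|$ nor the set $E=\{\|\cdot\|=0\}$ is a priori Baire measurable, so density of $\D$ does not formally give comeagreness of $E$; some genuine use of completeness together with the geometry of the action is unavoidable. The plan is to argue by contradiction: assuming $E$ is meagre in some open $W\subseteq U$, hyperbolic isometries are non-meagre in $W$, and, after the usual shrinking to a $V$ with $V^{k}V^{-k}\subseteq W$, completeness of $G$ should let one extract hyperbolic elements of amplitude tending to infinity and, interpolating them against the elliptic elements of $\D$ that are dense in $W$ and accumulate at $1$, build a convergent infinite product producing an element of infinite amplitude, exactly as in the proof of Theorem \ref{dihedral}. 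This is the step I expect to demand the most care, because on a general tree (unlike the line of Theorem \ref{dihedral}) infinite products of isometries can collapse to elliptic elements, and it is precisely the special structure of $\D$ — its conjugation invariance and its closure under powers of non-discrete elements — that must be exploited to keep the partial products coherently hyperbolic.

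Once $E$ is known to be somewhere comeagre, Lemmas \ref{finite segment}, \ref{half line} and \ref{line}, together with the final portion of the proof of Theorem \ref{equiv}, deliver an open subgroup consisting of elliptic isometries, i.e.\ condition (4); as the action was arbitrary, the ``moreover'' part of Theorem \ref{equiv} gives property (OA). An alternative endgame, which relocates rather than removes the same difficulty, is to pass to the normal subgroup $R$ generated by the elliptic isometries: by Theorem \ref{serre2} the quotient $G/R\cong\pi_1(G\setminus X)$ is free, so Dudley's Theorem \ref{dudley} makes $R$ open, and one can then rerun the fundamental-domain and orientation argument from the converse direction of Theorem \ref{equiv} on $R\curvearrowright X$, the crux again being to use the density of $\D$ to force every edge decomposition $R=A_e*_{R_e}A_{\overline e}$ to be trivial, whereupon $R$ acquires a neutral fixed end and so consists entirely of elliptic isometries.
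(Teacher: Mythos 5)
Your outer reduction is the same as the paper's: by Lemma \ref{bass} every element of $\D$ is elliptic in any action without inversion, so it suffices to verify condition (3) of Theorem \ref{equiv} for every action and then invoke its ``moreover'' part. However, the step you yourself single out as ``the hard part'' --- promoting somewhere dense to somewhere comeagre --- is a genuine gap in your write-up: you only sketch a proof by contradiction via infinite products modelled on Theorem \ref{dihedral}, and you concede that on a general tree such products can collapse to elliptic elements, so the plan as stated is not a proof. Moreover, this entire line of attack is unnecessary.

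The idea you are missing is that one should not try to show that $E$, the set of elliptic elements, is Baire measurable (it indeed need not be, a priori); it suffices to show that the subset $\D\subseteq E$ is somewhere comeagre, and $\D$ is \emph{automatically} a $G_\delta$ set, with no reference to any action. Fix a countable neighbourhood basis $\{V_k\}_{k\in\N}$ at the identity (this is where metrisability enters). Then
$$
\D=\bigcap_{k\in \N}\{g\in G\del \e n\geqslant 1\; g^n\in V_k\},
$$
since $\langle g\rangle$ is finite or non-discrete exactly when every neighbourhood of $1$ contains some positive power of $g$. Each set in the intersection is open, being the union over $n\geqslant 1$ of the preimages of $V_k$ under the continuous maps $g\mapsto g^n$. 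A $G_\delta$ set that is somewhere dense is comeagre in the interior of its closure, so $\D$ is somewhere comeagre, hence so is $E$ for every action, and Theorem \ref{equiv} finishes the proof. In particular, no use of completeness, conjugation invariance, closure under powers, or any geometry of the tree is needed at this step; all such content is already packaged inside Theorem \ref{equiv}. Your alternative ``endgame'' via Theorem \ref{serre2} and the orientation argument suffers from the same problem: it relocates, as you say, but does not fill, the missing comeagreness step.
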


\begin{proof}Note that by Lemma \ref{bass}, whenever $G$ acts without inversion on a tree, every element of $\D$ is elliptic. So, by Theorem \ref{equiv}, it suffices to show that $\D$ is somewhere comeagre. But, if  $\{V_k\}_{k\in \N}$ is  a neighbourhood basis at the identity of $G$, then
\begin{align*}
\D=&\{g\in G\del \langle g\rangle \textrm{ is either finite or non-discrete }\}\\
=&\bigcap_{k\in \N}\{g\in G\del \e n\geqslant 1\; g^n\in V_k\},
\end{align*}
is $G_\delta$ and somewhere dense, whence somewhere comeagre.
\end{proof}
Let us note that Conjecture \ref{conj}, stating that all completely metrisable groups have property (OA), would imply that any completely metrisable group $G$, that is either connected or has a dense conjugacy class, will have property (FA'). For in both cases, whenever $G$ acts without inversion on a tree, the amplitude would be constant and hence constantly $0$.

Theorem \ref{somewhere comeagre} can be strengthened in case the set $\D$ is actually dense.
\begin{thm}\label{dense}
Let $G$ be a completely metrisable group such that the set 
$$
\D=\{g\in G\del \langle g\rangle \textrm{ is either finite or non-discrete }\}
$$
is dense in $G$. Then $G$ has property (FA').
\end{thm}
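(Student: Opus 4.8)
The plan is to show that for an arbitrary action of $G$ without inversion on a tree $X$, the amplitude function vanishes identically; this is precisely property (FA'). The engine is Theorem \ref{equiv}, combined with the fact (Lemma \ref{bass}) that an element of $\D$ can never be hyperbolic, and with the extra strength that $\D$ is now \emph{dense} rather than merely somewhere dense.

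First I would fix such an action and recall, exactly as in the proof of Theorem \ref{somewhere comeagre}, that for a neighbourhood basis $\{V_k\}_{k\in\N}$ at the identity of $G$,
$$
\D=\bigcap_{k\in\N}\{g\in G\del \e n\geqslant 1\; g^n\in V_k\}
$$
is a $G_\delta$ subset of $G$. Since $\D$ is assumed dense and $G$, being completely metrisable, is a Baire space, $\D$ is comeagre, and in particular somewhere comeagre. By Lemma \ref{bass} every hyperbolic isometry generates an infinite discrete subgroup, so no element of $\D$ is hyperbolic; that is, every $g\in\D$ is elliptic. Hence the set $E$ of elliptic isometries contains $\D$ and is somewhere comeagre, so property (3) of Theorem \ref{equiv} is satisfied.

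Next I would apply the implication (3)$\saa$(2) of Theorem \ref{equiv} to conclude that the amplitude $\|\cdot\|\colon G\til\N$ is a continuous function. Since $\|g\|=0$ for every $g\in\D$, this continuous function vanishes on the dense set $\D$. As $\N$ carries the discrete topology, the set $\{g\in G\del\|g\|=0\}$ is clopen in $G$; being closed and containing a dense set, it must be all of $G$. Therefore every element of $G$ is elliptic in this action, which is exactly property (FA').

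I do not anticipate a genuinely difficult step, since the substance is already carried by Theorem \ref{equiv} and Lemma \ref{bass}. The one point worth emphasising is where and why density of $\D$ is essential: in Theorem \ref{somewhere comeagre} mere somewhere density yields, through Theorem \ref{equiv}, only property (OA), whereas full density allows the continuity of $\|\cdot\|$ into the discrete target $\N$ to propagate the value $0$ from $\D$ to the whole group. This is precisely the mechanism foreshadowed in the remark following Theorem \ref{somewhere comeagre}, and it is the only place the stronger hypothesis enters.
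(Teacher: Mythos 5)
Your proof is correct and follows essentially the same route as the paper: elements of $\D$ are elliptic by Lemma \ref{bass}, continuity of the amplitude comes from Theorem \ref{equiv}, and density of $\D$ propagates the value $0$ to all of $G$ since $\N$ is discrete. The only cosmetic difference is that you verify condition (3) of Theorem \ref{equiv} directly for the given action (inlining the $G_\delta$ argument), whereas the paper simply cites Theorem \ref{somewhere comeagre} and its consequence that the amplitude is continuous; the substance is identical.
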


\begin{proof}Suppose $G$ acts without inversion on a tree $X$. Then, by Theorem \ref{somewhere comeagre}, the amplitude function $\|\cdot\|\colon G\til \N$ is continuous on $G$. Moreover,  by Lemma \ref{bass}, $\|\cdot\|$ is constantly $0$ on the dense set $\D$ and hence constantly $0$ on $G$. So every element of $G$ is elliptic.
\end{proof}

The class of completely metrisable groups that satisfy the hypotheses of Theorems \ref{somewhere comeagre} and \ref{dense} is fairly rich and somehow orthogonal to the class of Polish groups with a comeagre conjugacy class. Of course, any element of a compact metric group generates a finite or non-discrete subgroup, but also in very large groups, such as the group of measure-preserving automorphisms of the unit interval with Lebesgue measure equipped with the weak topology, ${\rm Aut}([0,1],\lambda)$, the unitary group, $U(\ell_2)$, and the isometry group of the Urysohn metric space, ${\rm Isom}(\U)$, the generic element generates  non-discrete subgroup (see \cite{powers} for a more complete discussion of this phenomenon).


\end{document}